\newcommand{\bbz}{\mathbb{Z}}
\newcommand{\bbq}{\mathbb{Q}}
\newcommand{\bbr}{\mathbb{R}}
\newcommand{\mat}{\begin{pmatrix}}
\newcommand{\emat}{\end{pmatrix}}
\newtheorem{theorem}{Theorem}[section]
\newtheorem{corollary}[theorem]{Corollary}
\newtheorem{lemma}[theorem]{Lemma}
\newtheorem{rmk}[theorem]{Remark}
\newtheorem{conjecture}[theorem]{Conjecture}
\begin{document}
 \title[Additive Rigidity for $x$-Coordinates of Rational Points on Elliptic Curves]
{Additive Rigidity for $x$-Coordinates of Rational Points on Elliptic Curves}

\author{Seokhyun Choi}

\address{
Dept. of Mathematical Sciences, KAIST,
291 Daehak-ro, Yuseong-gu,
Daejeon 34141, South Korea
}
\email{sh021217@kaist.ac.kr}

\date{\today}
\subjclass[2020]{Primary 11G05}
\keywords{Elliptic curves, Generalized arithmetic progressions, Additive structures}

\begin{abstract}
    We study the interaction between the group law on an elliptic curve and the additive structure of $x$-coordinates of rational points on an elliptic curve. Let $E/\mathbb{Q}$ be an elliptic curve of Mordell-Weil rank $r \geq 1$, $d \geq 1$ be an integer, and $0<\rho \leq 1$. We show that if a $d$-dimensional proper generalized arithmetic progression in $\mathbb{Q}$ contains the $x$-coordinates of rational points on $E/\bbq$ with positive proportion $\rho$, then the number of such points is bounded by $A(E,d,\rho)^r$. The proof combines extraction lemmas, gap principles, and the bounds for spherical codes. As an application, we obtain restrictions on sets of rational points whose $x$-coordinates have small sumsets or large additive energy.
\end{abstract}

\maketitle

\section{Introduction}\label{Introduction}

Many problems in number theory arise from the interaction of distinct algebraic structures. 
Although such problems are often easy to formulate, they frequently lead to deep and difficult questions. A classical example occurs in the study of rational points on elliptic curves, where two different additive structures naturally appear: the group law on the Mordell-Weil group $E(\bbq)$ and the additive structure of the rational numbers $\bbq$ itself.

These two structures behave in very different ways. 
While Mordell's theorem asserts that $E(\bbq)$ is a finitely generated abelian group, the additive group $\bbq$ admits highly structured subsets such as long arithmetic progressions and generalized arithmetic progressions. 
Understanding how these two additive structures interact naturally leads to questions about additive patterns among the coordinates of rational points on elliptic curves.

One of the simplest instances of such a problem concerns arithmetic progressions among the $x$-coordinates of rational points. 
Given a finite sequence of rational points on an elliptic curve, one may ask whether their $x$-coordinates can form a long arithmetic progression.

To make this notion precise, let $E/\bbq$ be an elliptic curve and choose a Weierstrass equation
\[y^2 + a_1 xy + a_3 y = x^3+a_2x^2+a_4x+a_6,
\qquad a_i\in\bbz.\]
Given a finite sequence of rational points $\{P_1,\ldots,P_N\}\subseteq E(\bbq)$, we say that $\{P_1,\ldots,P_N\}$ is in \emph{$x$-arithmetic progression} if the set of $x$-coordinates
\[\{x(P_1),\ldots,x(P_N)\}\]
forms an arithmetic progression in $\bbq$.

This notion is independent of the choice of Weierstrass equation. 
Indeed, if one chooses another Weierstrass equation
\[y^2 + a_1'xy + a_3'y = x^3+a_2'x^2+a_4'x+a_6',\]
then the change of variables is given by
\[x\mapsto u^2x+r,\qquad y\mapsto u^3y+u^2sx+t\]
for some $u\in\bbq^\ast$ and $r,s,t\in\bbq$. 
Since affine transformations preserve arithmetic progressions, the notion of an $x$-arithmetic progression does not depend on the choice of Weierstrass equation.

Bremner (\cite{Bre99}) formulated the following conjecture concerning such progressions.

\begin{conjecture}[Bremner]\label{Bremner}
There exists an absolute constant $A>0$ such that for every elliptic curve $E/\bbq$ of rank $r$ and for every sequence $\{P_1,\ldots,P_N\}$ of rational points in $x$-arithmetic progression,
\[N \leq A^r .\]
\end{conjecture}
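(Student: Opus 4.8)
The strategy I would use is a point-counting argument in the Mordell--Weil lattice, with Lang's conjecture supplying exactly the uniformity that is missing from the naive version of that argument.

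\emph{Setup and reduction to a height bound.} Equip $V:=\bigl(E(\bbq)/E(\bbq)_{\mathrm{tors}}\bigr)\otimes\bbr\cong\bbr^{r}$ with the quadratic form given by the canonical height $\hat h$, let $\Lambda\subseteq V$ be the image of $E(\bbq)$, and let $\lambda_{1}\le\cdots\le\lambda_{r}$ be its successive minima, so that $\lambda_{1}^{2}=\hat h_{\min}$ is the least canonical height of a non-torsion rational point. Since $x(P)=x(-P)$ and $|E(\bbq)_{\mathrm{tors}}|\le 16$ by Mazur's theorem (the case $d=0$ being trivial), the distinct $x$-coordinates $x(P_{1}),\dots,x(P_{N})$ yield at least $N/32$ distinct vectors of $\Lambda$, all of norm $\le T:=\sqrt{H}$ where $H:=\max_{i}\hat h(P_{i})$. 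By the standard geometry-of-numbers estimate $\#\{v\in\Lambda:\|v\|\le T\}\le\prod_{j=1}^{r}\bigl(1+2T/\lambda_{j}\bigr)\le\bigl(1+2T/\lambda_{1}\bigr)^{r}$, we get
\[
N\;\le\;32\,\Bigl(1+2\sqrt{H/\hat h_{\min}}\,\Bigr)^{r}.
\]
Hence it suffices to prove $H\le C\,\hat h_{\min}$ for an absolute $C$, which then gives $N\ll A^{r}$ with $A=1+2\sqrt{C}$. The final uniform statement follows at once: granting a uniform bound $r\le r_{0}$ on ranks, $N\ll A^{r}\le A^{r_{0}}$.

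\emph{Making the two heights comparable.} Fix a measure $\mathfrak h(E)$ of the arithmetic complexity of $E$ (say $\mathfrak h(E)=1+\log\max\{|c_{4}|^{3},|c_{6}|^{2},|\Delta_{\min}|\}$). Silverman's explicit comparison of the naive and canonical heights gives $\hat h(P)\le\tfrac12 h(x(P))+\kappa_{1}\mathfrak h(E)$ with $\kappa_{1}$ absolute. Writing $x(P_{i})=a+(i-1)d$ and clearing a common denominator $\delta$ of $a$ and $d$, one has $x(P_{i})=(\alpha_{0}+(i-1)\beta_{0})/\delta$ with $\delta,|\alpha_{0}|,|\beta_{0}|\le e^{h(a)+h(d)}$, so $h(x(P_{i}))\le h(a)+h(d)+\log N+O(1)$ for every $i$, whence
\[
H\;\le\;\tfrac12\bigl(h(a)+h(d)\bigr)+\tfrac12\log N+\kappa_{1}\mathfrak h(E)+O(1).
\]
Lang's conjecture, in the form that there is an absolute $c>0$ with $\hat h(P)\ge c\,\mathfrak h(E)$ for every non-torsion $P\in E(\bbq)$, gives $\hat h_{\min}\ge c\,\mathfrak h(E)$. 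So the whole proof reduces to the single inequality $h(a)+h(d)+\log N\ll\mathfrak h(E)$.

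\emph{The main obstacle.} The crux is precisely that last inequality: controlling the ``size of the progression'' $h(a)+h(d)$ (and then absorbing the $\log N$) by an absolute multiple of $\mathfrak h(E)$. This is the only point at which the arithmetic-progression hypothesis must be used in an essential way, rather than the mere existence of $N$ rational points of comparable height — and it is genuinely necessary, since on a fixed curve the multiples $nP$ show that isolated rational points of arbitrarily large height and denominator do exist. I would attack it by exploiting the rigidity of the configuration together with the fact that an $x$-coordinate of a rational point on a minimal model has square denominator: for any three indices in arithmetic progression one has $x(P_{i_{1}})+x(P_{i_{3}})=2x(P_{i_{2}})$, and more generally $x(P_{i})+x(P_{j})$ depends only on $i+j$, so the secant-line formula of the group law expresses $x(P_{i}+P_{j})$ as a rational function with denominator $\bigl((i-j)d\bigr)^{2}$; iterating these relations — after an admissible normalization $x\mapsto u^{2}x+r$ of the progression — should force both $\delta$ and the archimedean sizes of the $x(P_{i})$ to be $O(\mathfrak h(E))$, once Lang's lower bound is applied to the points so produced. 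The residual $\log N$ is harmless whenever $\mathfrak h(E)\gtrsim\log N$, so that it only remains to be handled directly for ``small'' curves; that endgame is where I expect the real technical weight of the argument to sit.
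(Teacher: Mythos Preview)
Your reduction to $h(a)+h(d)+\log N\ll\mathfrak h(E)$ is a genuine gap, and that inequality is in fact false. On any fixed curve of positive rank, any two rational points with distinct $x$-coordinates already form a length-$2$ progression with $h(a)+h(d)$ as large as one likes while $\mathfrak h(E)$ stays fixed, and there is no mechanism forcing the situation to improve for $N=3$ or for $N$ just below the (a~priori unknown) threshold $A^r$. Your secant-line sketch does not close this: the sum $P_i+P_j$ lies outside the progression, Lang applied to it yields only a \emph{lower} bound $\hat h(P_i+P_j)\ge c\,\mathfrak h(E)$ which gives no upper control on $h(a)$ or $h(d)$, and the normalization $x\mapsto u^2x+r$ rescales $\mathfrak h(E)$ along with the progression. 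The sufficient condition $H\le C\,\hat h_{\min}$ that your geometry-of-numbers bound requires simply does not hold, so the route through $(1+2\sqrt{H/\hat h_{\min}})^r$ stalls at exactly the point you identify as the ``main obstacle''.

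The paper avoids bounding $H$ altogether. It slices the $x$-axis into intervals $I_n=[nY,(n+1)Y]$ with $Y$ a fixed multiple of the common difference $d$, and for $P_i,P_j$ lying in the same $I_n$ proves a \emph{gap principle}: an explicit Weil-height estimate for $x(P_i+P_j)$ (Lemmas~3.2 and~3.6) forces $\cos\theta_{P_i,P_j}\le\cos\theta_0<1$ in the Mordell--Weil lattice, after which the Kabatyanskii--Levenshtein spherical-code bound (Theorem~2.2) caps each interval at $\ll A^r$ points. The choice of $Y$ then pins the entire progression to at most two adjacent intervals. Lang's conjecture enters only through Lemma~4.1, to bound the number of points in the initial interval of small $x$-coordinate. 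For non-integral points the decisive extra ingredient---with no analogue in your outline---is the combinatorial Lemma~5.1: among any $2\lceil 1/\delta\rceil$ consecutive terms $x_i/s$ of the progression, at least one satisfies $\gcd(x_i,s)\le s^\delta$, and this is precisely what keeps the gap-principle constants uniform in the common denominator $s$.
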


Some evidence supporting this conjecture is known. See, for instance, \cite{Bre99}, \cite{Cam03}, \cite{GT05}, \cite{MZ17}, and \cite{Spe11}. In particular, Bremner, Silverman, and Tzanakis \cite{BST00} proved that when one restricts to the quadratic twist family of a fixed elliptic curve and considers integral points in $x$-arithmetic progression lying in a rank $1$ subgroup, then the length of such a progression is uniformly bounded. 

More recently, Garc\'ia-Fritz and Past\'en \cite{GP21} obtained further progress toward Conjecture~\ref{Bremner}, establishing the conjecture for families of elliptic curves with fixed $j$-invariant. Their argument used R\'{e}mond's theorem, whose constants depend on the height of the ambient abelian variety. In a recent note \cite{GP26}, they explain that replacing R\'{e}mond's theorem by the uniform Mordell-Lang conjecture of Gao-Ge-K\"uhne \cite{GGK21} removes this dependence and yields Bremner's conjecture~\ref{Bremner}. They also give a short proof of the corresponding uniformity consequence, namely that uniformly bounded ranks imply uniformly bounded lengths of $x$-arithmetic progressions, using the theorem of Dimitrov-Gao-Habegger \cite{DGH21}.

The purpose of this paper is to study a broader phenomenon underlying Conjecture~\ref{Bremner}. 
Rather than restricting attention to arithmetic progressions, we investigate the interaction between rational points on elliptic curves and sets possessing additive structure in the sense of additive combinatorics.

From the viewpoint of additive combinatorics, generalized arithmetic progressions provide a flexible model for highly structured subsets of abelian groups. 
Many inverse theorems show that sets exhibiting strong additive properties are efficiently described by generalized arithmetic progressions of bounded dimension. 
It is therefore natural to ask how rational points on elliptic curves may populate such sets.

A $d$-dimensional generalized arithmetic progression in $\bbq$ is a set of the form
\[\{a_0 + k_1a_1 + \cdots + k_da_d\:|\:0 \leq k_1 < N_1,\ldots,0 \leq k_d < N_d\}\]
where $a_0,\ldots,a_d\in\bbq$ and $N_1,\ldots,N_d$ are positive integers. A generalized arithmetic progression is called proper if the cardinality of the set 
\[\{a_0 + k_1a_1 + \cdots + k_da_d\:|\:0 \leq k_1 < N_1,\ldots,0 \leq k_d < N_d\}\]
equals $N_1 \cdots N_d$.

Our main theorem shows that if a proper generalized arithmetic progression contains a positive proportion of the $x$-coordinates of rational points on an elliptic curve, then the total number of such points is strongly restricted. This shows that the additive structure of $\bbq$ and the group structure of $E(\bbq)$ are fundamentally incompatible.

\begin{theorem}\label{main_theorem}
Let $E/\bbq$ be an elliptic curve of Mordell-Weil rank $r \geq 1$. Let $d \geq 1$ be an integer and $0<\rho \leq 1$. Then there exists an effectively computable constant $A(E,d,\rho)>0$ with the following property.

For any finite subset $\mathcal P \subseteq E(\bbq)$ such that
\begin{enumerate}
    \item the set of $x$-coordinates $x(\mathcal P)$ is contained in a $d$-dimensional proper generalized arithmetic progression $G$, and
    \item $\lvert x(\mathcal P)\rvert \geq \rho \lvert G \rvert$,
\end{enumerate}
we have
\[\lvert \mathcal P \rvert \leq A(E,d,\rho)^{r}.\]

Moreover, assume that Conjecture~\ref{Lang_conjecture} holds with an explicitly given admissible constant $c_L>0$. Then $A(E,d,\rho)$ may be taken to be an effectively computable constant depending only on $d$ and $\rho$.
\end{theorem}

The case $d=1$ and $\rho=1$ recovers the type of additive pattern considered in Bremner's conjecture~\ref{Bremner}. The novelty of Theorem~\ref{main_theorem} is that it treats positive-density containment in proper generalized arithmetic progressions, with constants that are effective for a fixed curve and conditionally uniform and effective under Lang's conjecture~\ref{Lang_conjecture}.

\begin{rmk}
    The proof of Theorem~\ref{main_theorem} gives a slightly more general statement. Let $\Gamma \subseteq E(\bbq)$ be a subgroup of rank $s$. Then the same conclusion holds for finite subsets $\mathcal P \subseteq \Gamma$, with the exponent $r$ replaced by $s$. Indeed, the argument only uses the Mordell-Weil lattice generated by the points under consideration. The spherical-code bounds are applied in $\Gamma \otimes_\bbz \bbr$, whose dimension is $s$, and the other arguments are unchanged.
\end{rmk}

As immediate consequences, combining Theorem~\ref{main_theorem} with Freiman's theorem or Balog-Szemer\'{e}di-Gowers theorem from additive combinatorics yields strong restrictions on sets of rational points whose $x$-coordinates have small sumsets or large additive energy.

\begin{corollary}\label{Freiman_cor_intro}
    Let $E/\bbq$ be an elliptic curve of Mordell-Weil rank $r \geq 1$ and let $\mathcal P \subseteq E(\bbq)$ be a finite subset. Put $S=x(\mathcal P)$. 
    
    Suppose that
    \[\lvert S+S \rvert \leq K\lvert S \rvert\]
    for some constant $K>0$. Then there exists a constant $A(E,K)>0$ such that
    \[\lvert \mathcal P \rvert \leq A(E,K)^r.\]

    Moreover, assume that Conjecture~\ref{Lang_conjecture} holds with an explicitly given admissible constant $c_L>0$. Then $A(E,K)$ may be taken to be an effectively computable constant depending only on $K$.
\end{corollary}

\begin{corollary}
    Let $E/\bbq$ be an elliptic curve of Mordell-Weil rank $r \geq 1$ and let $\mathcal P \subseteq E(\bbq)$ be a finite subset. Put $S=x(\mathcal P)$. 
    
    Suppose that 
    \[\lvert E(S) \rvert \geq \frac{\lvert S \rvert^3}{K}\]
    for some constant $K>0$. Then there exists a constant $A(E,K)>0$ such that
    \[\lvert \mathcal P \rvert \leq A(E,K)^r.\]

    Moreover, assume that Conjecture~\ref{Lang_conjecture} holds with an explicitly given admissible constant $c_L>0$. Then $A(E,K)$ may be taken to be an effectively computable constant depending only on $K$.
\end{corollary}

Our argument relies on gap principles for rational points of large canonical height, originating in work of Helfgott and Venkatesh (\cite{HV06}). These principles imply that rational points with comparable canonical heights must be separated by a definite angle in the Mordell-Weil lattice. Combined with bounds for spherical codes, this leads to quantitative limits on the number of such points.

Finally, we recall Lang's conjecture on canonical heights (\cite{Lan78}, \cite{Sil84}), which appears in the statement of the theorem. It predicts that the canonical height of every non-torsion rational point is bounded below by a constant multiple of the height of the $j$-invariant or the minimal discriminant of the curve. This conjecture plays a fundamental role in the study of the distribution of rational points on elliptic curves.

\begin{conjecture}\label{Lang_conjecture}
    There exists an absolute constant $c_L$ such that for every elliptic curve $E/\bbq$ with $j$-invariant $j_E$ and minimal discriminant $\Delta_E$,
    \[\hat h(P) \geq c_L\max\{h(j_E),h(\Delta_E)\}\]
    for all non-torsion points $P\in E(\bbq)$.
\end{conjecture}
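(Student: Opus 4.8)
The plan is to establish the inequality through the standard decomposition of the canonical height into local contributions, and then to reduce the \emph{uniformity} of the constant to Szpiro's conjecture, following the strategy of Hindry and Silverman. Fix a minimal Weierstrass equation for $E/\bbq$, so that $\Delta_E \in \bbz$ and $h(\Delta_E) = \log|\Delta_E|$. For a non-torsion point $P \in E(\bbq)$ write $\hat h(P) = \hat\lambda_\infty(P) + \sum_{\ell} \hat\lambda_\ell(P)$, the sum running over the rational primes, where $\hat\lambda_v$ is the Néron local height at $v$. At a prime $\ell$ of good reduction one has $\hat\lambda_\ell(P) \geq 0$, so those places only help. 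At the archimedean place the classical estimates (\cite{Lan78}, in the explicit form of \cite{Sil84}) give $\hat\lambda_\infty(P) \geq -\tfrac{1}{12}h(j_E) - C_0$ for an absolute constant $C_0$; this is precisely the term that must be absorbed by the comparison against $\max\{h(j_E), h(\Delta_E)\}$, and it is harmless once $c_L$ is taken small enough. Thus the heart of the matter is the contribution of the primes $\ell \mid \Delta_E$ of bad reduction.

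At each bad prime $\ell$, Tate's algorithm fixes the Kodaira type and the component group of the Néron model, and $\hat\lambda_\ell(P)$ is then an explicit piecewise-quadratic function of $\mathrm{ord}_\ell x(P)$, i.e.\ of the component to which $P$ reduces. If $P$ reduces to the identity component then $\hat\lambda_\ell(P) \geq 0$; otherwise there is a negative defect, bounded below by roughly $-\tfrac{1}{12}\,\mathrm{ord}_\ell(\Delta_E)\,\log\ell$, so the total defect over the bad primes is $\gg -\log|\Delta_E|$. The mechanism for a lower bound is that a point with large negative local height at many bad primes has $x(P)$ built from highly divisible integers, which by the product formula forces its Weil height up; making this quantitative is exactly the Hindry–Silverman theorem, whose output has the shape $\hat h(P) \geq c(\sigma_E)\log|\Delta_E|$, where $\sigma_E = \log|\Delta_E|/\log N_E$ is the Szpiro ratio, $N_E$ the conductor, and $c(\sigma_E) > 0$ depends only on $\sigma_E$ (and decays as $\sigma_E \to \infty$).

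To turn this into a bound with an \emph{absolute} constant $c_L$ — and simultaneously to dispose of the $h(j_E)$ term — I would invoke Szpiro's conjecture, equivalently the ABC conjecture, in the form $\max\{|c_4|^3, |c_6|^2\} \ll_\epsilon N_E^{6+\epsilon}$. This bounds $\sigma_E$ by an absolute constant, hence $c(\sigma_E) \geq c_L > 0$, and it gives $h(j_E) \ll \log N_E \leq \log|\Delta_E|$, so that $\max\{h(j_E), h(\Delta_E)\} \asymp \log|\Delta_E|$; combined with the previous paragraph this yields the stated inequality. The main obstacle is that this does not prove Conjecture~\ref{Lang_conjecture} unconditionally: the reduction above is essentially sharp, and producing an absolute $c_L$ genuinely requires uniform control of the Szpiro ratio — a statement equivalent to ABC, still open — while unconditionally the worst case is that of ABC near-miss curves, for which $h(j_E)$ may far exceed $\log|\Delta_E|$ and the archimedean estimate above runs the wrong way. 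This is why the statement is recorded here as a conjecture and used only as a hypothesis in the sequel; the unconditional evidence is Silverman's theorem settling all $E$ with integral $j$-invariant, together with the Szpiro-ratio-dependent bound of Hindry and Silverman just described.
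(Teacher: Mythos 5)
This statement is Lang's conjecture: the paper does not prove it and does not claim to. It is stated only as a hypothesis (Conjecture~\ref{Lang_conjecture}) under which the main theorem is proved, and the paper's surrounding discussion merely records the known partial results — Silverman's theorem \cite{Sil84} for curves whose $j$-invariant has a bounded number of places of negative valuation (in particular integral $j$), the Hindry--Silverman bound \cite{HS88} for curves of bounded Szpiro ratio, and the quadratic twist case \cite{Sil86}. Your proposal is in the same position: you do not claim an unconditional proof, and your sketch — decomposing $\hat h$ into N\'eron local heights, noting that the only negative contributions come from the archimedean place and the bad primes, invoking Hindry--Silverman to get $\hat h(P) \geq c(\sigma_E)\log\lvert\Delta_E\rvert$ with $\sigma_E$ the Szpiro ratio, and then appealing to Szpiro/ABC to make the constant absolute and to dominate $h(j_E)$ by $\log\lvert\Delta_E\rvert$ — is exactly the standard route the paper implicitly has in mind when it cites these references. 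So there is no discrepancy of approach to report: both the paper and your writeup leave the statement as an open conjecture, and your account of the evidence matches the paper's.

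One small caution about the internal logic of your sketch: the remark that the archimedean defect $-\tfrac{1}{12}h(j_E)-C_0$ is ``harmless once $c_L$ is taken small enough'' is not right as stated. A lower bound of the form $\hat h(P) \geq -\tfrac{1}{12}h(j_E)-C_0$ is negative and cannot be rescued by shrinking $c_L$; the target is a \emph{positive} multiple of $\max\{h(j_E),h(\Delta_E)\}$. The actual mechanism in \cite{Sil84} and \cite{HS88} is to average the local heights over small multiples $nP$ (a Fourier/averaging argument on the component groups and on the archimedean fiber), which converts the potentially large negative local defects into a positive total contribution of size $\gg \log\lvert\Delta_E\rvert$, with a constant that degrades as the Szpiro ratio (equivalently, the number of places with $v(j_E)<0$ and the sizes of the local components) grows. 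This is precisely why uniformity is equivalent in strength to Szpiro-type control and why the statement remains conjectural, as you correctly conclude.
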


Consequently, for any family $\mathscr F$ of elliptic curves satisfying one of the following conditions, the dependence on $E$ in Theorem~\ref{main_theorem} can be removed; that is, the constant $A(E,d,\rho)$ may be chosen to be an effectively computable constant depending only on $d$ and $\rho$:
\begin{enumerate}[(i)]
    \item elliptic curves with integral $j$-invariant;
    \item elliptic curves with bounded Szpiro ratio; or
    \item quadratic twist families of elliptic curves.
\end{enumerate}
In these cases Lang's conjecture is known to hold uniformly.

The paper is organized as follows. Section~\ref{Mordell-Weil_Geometry} reviews canonical heights and the geometry of the Mordell-Weil lattice. Section~\ref{Gap_principles} develops the gap principles used in the argument. Section~\ref{Extraction_lemma} proves an extraction lemma concerning generalized arithmetic progressions. Section~\ref{Reduction} reduces Theorem~\ref{main_theorem} to two theorems according to small $x$-coordinates and large $x$-coordinates. Section~\ref{Proof1} and Section~\ref{Proof2} prove the corresponding theorems, respectively. Finally, Section~\ref{Applications} provides several applications of Theorem~\ref{main_theorem}.

\section{The Geometry of the Mordell-Weil Lattice}\label{Mordell-Weil_Geometry}

\subsection{Mordell-Weil geometry}

Let $E/\bbq$ be an elliptic curve of Mordell-Weil rank $r$. Then $E(\bbq) \otimes_\bbz \bbr$ may be naturally identified with $\bbr^r$ and the canonical height $\hat{h}$ on $E(\bbq)$ extends $\bbr$-linearly to a positive definite quadratic form on this space. Thus the canonical height $\hat{h}$ endows the real vector space 
\[E(\bbq) \otimes_\bbz \bbr \cong \bbr^r\]
with the structure of a Euclidean space of dimension $r$, in which rational points may be viewed as vectors.

Let $P,Q \in E(\bbq)$ be rational points. The inner product $\langle P,Q \rangle$ is defined by 
\[\langle P,Q \rangle := \frac{1}{2}(\hat{h}(P+Q) - \hat{h}(P) - \hat{h}(Q)),\]
and the norm $\lVert P \rVert$ is defined by 
\[\lVert P \rVert := \sqrt{\langle P,P \rangle} = \sqrt{\hat{h}(P)}.\]
If $P,Q$ are non-torsion, the angle $\theta_{P,Q}$ between $P,Q$ is defined by the formula 
\[\cos \theta_{P,Q} := \frac{\langle P,Q \rangle}{\lVert P \rVert\lVert Q \rVert} = \frac{\hat{h}(P+Q) - \hat{h}(P) - \hat{h}(Q)}{2\sqrt{\hat{h}(P)\hat{h}(Q)}}.\]

Suppose a finite set $X$ of non-torsion points in $E(\bbq)$ satisfies 
\[\cos \theta_{P,Q} \leq \cos \theta_0,\quad P,Q \in X\]
for some $\theta_0>0$. Then the image of $X$ under 
\[X \longrightarrow E(\bbq) \otimes_\bbz \bbr,\quad P \longmapsto P \otimes \frac{1}{\sqrt{\hat{h}(P)}}\]
is a finite set of unit vectors with uniform angular separation. After normalization by height, collections of rational points with uniform angular separation may therefore be regarded as spherical codes, which are introduced in the next subsection.

\subsection{Spherical codes}

Let $\Omega_r$ denote the unit sphere in $\bbr^r$. A finite subset $X \subseteq \Omega_r$ is called a spherical code with minimal angle $\theta$ if $\langle x,y \rangle \leq \cos \theta$ for every distinct $x,y \in X$. We write $A(r,\theta)$ for the maximum size of the spherical code $X$. We recall two standard bounds for $A(r,\theta)$; one for $0<\theta<\pi/2$ and one for $\theta>\pi/2$.

\begin{theorem}\label{spherical_code_bound1}
    For fixed $0<\theta<\pi/2$, 
    \[\frac{1}{r}\log A(r,\theta) \leq \frac{1+\sin \theta}{2\sin \theta}\log\frac{1+\sin \theta}{2\sin \theta} - \frac{1-\sin \theta}{2\sin \theta}\log \frac{1-\sin \theta}{2\sin \theta} + o(1),\]
    where $o(1) \rightarrow 0$ as $r \rightarrow \infty$ and $o(1)$ is explicit for $\theta$.

    Consequently, there exists an effectively computable constant $c(\theta)$ such that 
    \[A(r,\theta) \leq c(\theta)^r.\]
\end{theorem}
\begin{proof}
    See \cite{KL78}.
\end{proof}

\begin{theorem}\label{spherical_code_bound2}
    For fixed $\theta>\pi/2$, there exists an effectively computable constant $c(\theta)$ such that 
    \[A(r,\theta) \leq c(\theta).\]
\end{theorem}
\begin{proof}
    See \cite[Chapter~1]{CS99}.
\end{proof}

The above discussion shows that any arithmetic mechanism producing angular separation between rational points imposes packing constraints in the Mordell-Weil lattice. In the next section we develop such a mechanism, which we call the gap principle.

\section{Gap principles}\label{Gap_principles}

The geometric framework developed in the previous section shows that large collections of rational points can be controlled once uniform angular separation is available in the Mordell-Weil lattice. The purpose of this section is to establish such separation results via gap principles for rational points on elliptic curves.

Roughly speaking, gap principles assert that rational points of comparable canonical height cannot lie too close to each other in the Mordell-Weil lattice unless strong arithmetic degeneracies occur. The formulations obtained here are adapted to the additive structures that will arise later in the paper.

The gap principles used in this paper originate in work of Helfgott \cite{Hel04} and subsequent refinements such as \cite{Alp14}. Earlier formulations were primarily suited to integral points. The versions established here are adapted to rational points and to the additive configurations considered in the present work.

\subsection{Weierstrass models and global height parameters}

In order to obtain explicit Diophantine estimates, we fix throughout the paper a Weierstrass
model for the elliptic curve and introduce a global height parameter governing its arithmetic complexity.

Let $E/\bbq$ be an elliptic curve and set 
\[M_E = \max\{h(j_E),h(\Delta_E)\}\]
where $j_E$ denotes the $j$-invariant of $E$ and $\Delta_E$ its minimal discriminant. 

We choose a minimal Weierstrass equation
\[E:y^2 + a_1 xy + a_3 y = x^3+a_2x^2+a_4x+a_6,\quad a_i \in \bbz\]
for $E$. After a standard change of variables 
\[x \longmapsto \frac{1}{36}(x-3b_2),\quad y \longmapsto \frac{1}{2}\left(\frac{y}{108}-\frac{a_1}{36}(x-3b_2)-a_3\right),\] 
where $b_2=a_1^2+4a_2$, this equation may be written in short Weierstrass form 
\[E:y^2 = x^3+Ax+B,\quad A,B \in \bbz.\]
The discriminant is changed by $\Delta = 6^{12}\Delta_E$. 

We define the global parameter 
\[X = \max\{\lvert A \rvert^3,\lvert B \rvert^2\}\] 
which will serve as a uniform scale for all height estimates appearing in the paper. The discriminant and $j$-invariant satisfy 
\[\Delta = -16(4A^3+27B^2),\quad j = 1728\frac{4A^3}{4A^3+27B^2}.\]
Consequently, standard height estimates yield 
\begin{equation}\label{Delta_j_upper_bound}
    h(\Delta) \leq \log X + 6.21,\quad h(j) \leq \log X + 8.85
\end{equation}
and 
\begin{equation}\label{Delta_j_lower_bound}
    \log X \leq h(\Delta)+h(j) + 0.7.
\end{equation}
Note that $h(\Delta_E) \geq \log 11$. Therefore, we have 
\begin{equation}\label{M_E_lower_bound}
    M_E \geq 2.39.
\end{equation}
and 
\begin{equation}\label{X_canonical_lower_bound}
    \log X \geq 17.68.
\end{equation}
Also \eqref{Delta_j_upper_bound} and \eqref{Delta_j_lower_bound} imply 
\begin{equation}\label{X_lower_bound}
    M_E \leq \log X + 8.85 \leq 2\log X
\end{equation}
and 
\begin{equation}\label{X_upper_bound}
   \log X \leq 2M_E + 43.71 \leq 21M_E.
\end{equation}
In particular, the quantities $M_E$ and $\log X$ are comparable up to absolute constants.

Throughout the paper we therefore work with the fixed integral model
\[E:y^2=x^3+Ax+B,\quad A,B\in\bbz,\]
and express all height estimates in terms of the global parameter $X$ introduced above. This normalization allows the gap principles established below to be formulated uniformly.

\subsection{Height estimates}

In this subsection we establish explicit height estimates that will later translate into angular separation in the Mordell-Weil lattice.

For a rational point $P \in E(\mathbb Q)$, we define the Weil height $h(P)$ by 
\[h(P) := h(x(P))\]
and we define the canonical height $\hat{h}(P)$ by 
\[\hat{h}(P) := \lim_{n \rightarrow \infty} \frac{h(2^nP)}{4^n}.\]
We note that this normalization differs from the convention including an additional factor $1/2$.

We begin with a standard comparison between the Weil height $h$ and the canonical height $\hat{h}$, which allows us to pass between intrinsic and explicit height estimates.

\begin{lemma}\label{Weil_canonical_height}
    Let $P \in E(\bbq)$. Then 
    \[-\frac{5}{12}\log X -5.2 \leq \hat{h}(P) - h(P) \leq \frac{1}{3}\log X+4.65.\]
    In particular, 
    \[-\frac{3}{4}\log X \leq \hat{h}(P) - h(P) \leq \frac{2}{3}\log X.\]
\end{lemma}
\begin{proof}
    By \cite{Sil90},  
    \[-\frac{1}{4}h(j)-1.946 - \frac{1}{6}h(\Delta) \leq \hat{h}(P) - h(P) \leq \frac{1}{6}h(j) + 2.14 + \frac{1}{6}h(\Delta).\]
    By \eqref{Delta_j_upper_bound}, the first statement follows. For the second statement, apply \eqref{X_canonical_lower_bound}.
\end{proof}

The next lemma provides the basic Diophantine height inequality underlying the gap principles proved later.

\begin{lemma}\label{Weil_height_estimate}
    Let $0 \leq \delta \leq 1$ be a fixed constant. Let $P,Q \in E(\bbq)$ satisfy $X^{1/6} \leq x(P)<x(Q)$ and 
    \[x(P) = \frac{x_1}{s},\quad x(Q) = \frac{x_2}{s}\]
    where $x_1,x_2,s \in \bbz$ satisfy $\gcd(x_1,s) \leq s^{\delta}$, $\gcd(x_2,s) \leq s^{\delta}$.
    Then 
    \begin{equation}\label{weil0}
        h(P+Q) \leq h(P)+2h(Q)+3\delta h(s)+2.9.
    \end{equation}
\end{lemma}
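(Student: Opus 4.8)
The plan is to rewrite $x(P+Q)$ as an explicit quotient of two integers, bound that quotient at the archimedean place, and then cancel a hidden factor of $s$ that the archimedean estimate does not detect. Since $x(P)<x(Q)$ we have $P\neq\pm Q$, so $x(P+Q)=\bigl(\frac{y(Q)-y(P)}{x(Q)-x(P)}\bigr)^{2}-x(P)-x(Q)$; substituting the Weierstrass relations $y(P)^{2}=x(P)^{3}+Ax(P)+B$ and $y(Q)^{2}=x(Q)^{3}+Ax(Q)+B$ and simplifying, one gets
\[
x(P+Q)=\frac{(x(P)+x(Q))(x(P)x(Q)+A)+2B-2y(P)y(Q)}{(x(P)-x(Q))^{2}}.
\]
Setting $Y_{1}=y(P)s^{2}$ and $Y_{2}=y(Q)s^{2}$, one has $Y_{i}^{2}=s(x_{i}^{3}+Ax_{i}s^{2}+Bs^{3})\in\bbz$, so each $Y_{i}$ is a rational integer; multiplying the numerator and denominator above by $s^{4}$ then gives the integer quotient
\[
x(P+Q)=\frac{s(x_{1}+x_{2})(x_{1}x_{2}+As^{2})+2Bs^{4}-2Y_{1}Y_{2}}{s^{2}(x_{1}-x_{2})^{2}}.
\]

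Next I would bound the numerator. Here the hypothesis $x(P),x(Q)\ge X^{1/6}$ is exactly what forces the cubic terms to dominate: from $|A|\le X^{1/3}$ and $|B|\le X^{1/2}$ one gets $|A|s^{2}\le x_{1}x_{2}$ and $|B|s^{4}\le sx_{1}x_{2}^{2}$, while $|y(P)|\le\sqrt{3}\,x(P)^{3/2}$ (and likewise for $Q$) gives $|Y_{1}Y_{2}|\le 3sx_{1}x_{2}^{2}$; hence the numerator has absolute value at most $12sx_{1}x_{2}^{2}$, which also exceeds the denominator since $x_{1}\ge s$ (note $x(P)\ge X^{1/6}\ge1$ as $X\ge1$). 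The key point is that $s$ divides this numerator: its first two summands are visibly divisible by $s$, and $s\mid Y_{1}Y_{2}$ because $s\mid Y_{i}^{2}$ forces $v_{p}(Y_{i})\ge\lceil v_{p}(s)/2\rceil$, hence $v_{p}(Y_{1}Y_{2})\ge v_{p}(s)$, for every prime $p$. Since $s$ also divides the denominator, $\gcd(\mathrm{num},\mathrm{den})\ge s$, and therefore
\[
h(P+Q)=\log\frac{\max(|\mathrm{num}|,|\mathrm{den}|)}{\gcd(\mathrm{num},\mathrm{den})}\le\log\bigl(12x_{1}x_{2}^{2}\bigr)=\log 12+\log x_{1}+2\log x_{2}.
\]

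To finish, set $f_{P}=\gcd(x_{1},s)$ and $f_{Q}=\gcd(x_{2},s)$; since $x_{1},x_{2}\ge s$ we have $h(P)=\log x_{1}-\log f_{P}$ and $h(Q)=\log x_{2}-\log f_{Q}$, so $\log x_{1}+2\log x_{2}=h(P)+2h(Q)+\log f_{P}+2\log f_{Q}\le h(P)+2h(Q)+3\delta h(s)$ by the hypothesis $f_{P},f_{Q}\le s^{\delta}$, and since $\log 12<2.9$ this is \eqref{weil0}. The one genuinely delicate step is the divisibility $s\mid Y_{1}Y_{2}$: neither $Y_{i}$ need be divisible by $s$, but each carries the ``square-root'' amount $\lceil v_{p}(s)/2\rceil$, which is just enough after the two are multiplied; without this cancellation the bound would be larger by $\log s$, and the term $3\delta h(s)$ cannot absorb that when $\delta$ is small. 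The remaining ingredients — the algebraic simplification of the addition law, the archimedean size estimates, and the final bookkeeping between $h$, $x_{1}$, $x_{2}$, $f_{P}$, $f_{Q}$, $s$ — are all routine, and in particular none of them uses largeness of $X$.
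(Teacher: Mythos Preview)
Your proof is correct and follows essentially the same route as the paper: write $x(P+Q)$ via the addition law as a quotient of integers, bound numerator and denominator at the archimedean place using $x(P),x(Q)\ge X^{1/6}$, and then convert $\log x_i$ to $h(P),h(Q)$ via the gcd hypothesis. The only difference is cosmetic---the paper clears denominators with $s^{3}$ and applies height inequalities directly, whereas you clear with $s^{4}$ and then cancel a common factor of $s$; your explicit verification that $s\mid Y_{1}Y_{2}$ is a point the paper passes over silently (its term $2s^{3}y(P)y(Q)$ is tacitly treated as an integer), so if anything your write-up is the more careful of the two.
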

\begin{proof}
    We have 
    \begin{align*}
        x(P+Q) &= \left(\frac{y(P)-y(Q)}{x(P)-x(Q)}\right)^2 - (x(P)+x(Q)) \\
        &= \frac{(x(P)x(Q)+A)(x(P)+x(Q))+2B - 2y(P)y(Q)}{(x(P)-x(Q))^2} \\
        &= \frac{(x_1x_2+s^2A)(x_1+x_2)+2s^3B - 2s^3y(P)y(Q)}{s(x_1-x_2)^2}.
    \end{align*}
    By using the inequalities 
    \[\lvert A \rvert \leq X^{1/3},\quad \lvert B \rvert \leq X^{1/2}.\]
    and estimates 
    \[h(x+y) \leq \max\{h(x),h(y)\} + \log 2,\quad h(xy) \leq h(x)+h(y),\]
    we have 
    \begin{gather*}
        h((x_1x_2+s^2A)(x_1+x_2)) \leq h(x_1)+2h(x_2)+2\log 2, \\
        h(2s^3B) \leq h(x_1)+2h(x_2)+\log 2, \\
        h(2s^3y(P)y(Q)) \leq h(x_1)+2h(x_2)+\log 6.
    \end{gather*}
    For the estimate involving the term $y(P)y(Q)$, we additionally used the relations 
    \[s^3y(P)^2 = x_1^3+s^2Ax_1+s^3B,\quad s^3y(Q)^2 = x_2^3+s^2Ax_2+s^3B,\]
    which follow from the defining equation of $E$. Therefore, 
    \[h((x_1x_2+s^2A)(x_1+x_2)+2s^3B - 2s^3y(P)y(Q)) \leq h(x_1)+2h(x_2)+\log 18.\]
    Since $x_1<x_2$, 
    \[h(s(x_1-x_2)^2) \leq h(sx_2^2) \leq h(x_1)+2h(x_2).\]
    Hence, 
    \begin{equation}\label{weil1}
        h(x(P+Q)) \leq h(x_1)+2h(x_2)+\log 18.
    \end{equation}

    Finally, $(x_1,s) \leq s^{\delta}$ and $(x_2,s) \leq s^{\delta}$ imply 
    \begin{equation}\label{weil2}
        h(P) \geq h(x_1) - \delta h(s),\quad h(Q) \geq h(x_2) - \delta h(s).
    \end{equation}
    Combining \eqref{weil1} and \eqref{weil2} implies \eqref{weil0}.
\end{proof}

In particular, the cases $\delta=0$ and $\delta=1$ yield respectively 
\begin{equation}\label{delta=0}
    h(P+Q) \leq h(P)+2h(Q)+2.9
\end{equation}
and 
\begin{equation}\label{delta=1}
    h(P+Q) \leq h(P)+2h(Q)+3h(s)+2.9.
\end{equation}

For points with small $x$-coordinates, a different type of height estimate is required.

\begin{lemma}\label{Weil_height_estimate_small}
    Let $P,Q \in E(\bbq)$ satisfy $\lvert x(P) \rvert,\lvert x(Q) \rvert \leq 2X^{1/6}$, and 
    \[x(P) = \frac{x_1}{s},\quad x(Q) = \frac{x_2}{s}\]
    where $x_1,x_2,s \in \bbz$ satisfy $x_1 \neq x_2$. Then 
    \[h(P+Q) \leq 3h(s) + \frac{1}{2}\log X+3.9.\]
\end{lemma}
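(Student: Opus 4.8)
The plan is to run the argument of Lemma~\ref{Weil_height_estimate} once more, but now using the upper bound $\lvert x(P)\rvert,\lvert x(Q)\rvert\leq 2X^{1/6}$ in place of a lower bound on the $x$-coordinates. Since $x_1\neq x_2$ we have $x(P)\neq x(Q)$, hence $P\neq\pm Q$, so $P+Q\neq O$ and the secant addition formula
\[
x(P+Q)=\left(\frac{y(P)-y(Q)}{x(P)-x(Q)}\right)^2-(x(P)+x(Q))=\frac{(x_1x_2+s^2A)(x_1+x_2)+2s^3B-2s^3y(P)y(Q)}{s(x_1-x_2)^2}
\]
is valid, exactly as in the proof of Lemma~\ref{Weil_height_estimate}. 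Write $N$ for the numerator and $D=s(x_1-x_2)^2$ for the denominator; both are integers ($N$ because $2s^3y(P)y(Q)$ is, as explained below), and $D\neq 0$ since $s\neq 0$ and $x_1\neq x_2$.

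Next I would bound $\lvert N\rvert$ and $\lvert D\rvert$ directly as integers. From $\lvert x_i\rvert\leq 2X^{1/6}\lvert s\rvert$, $\lvert A\rvert\leq X^{1/3}$, $\lvert B\rvert\leq X^{1/2}$ one gets $\lvert x_1x_2+s^2A\rvert\leq 5X^{1/3}s^2$ and $\lvert x_1+x_2\rvert\leq 4X^{1/6}\lvert s\rvert$, so the first summand of $N$ is at most $20X^{1/2}\lvert s\rvert^3$ in absolute value, and $\lvert 2s^3B\rvert\leq 2X^{1/2}\lvert s\rvert^3$. For the cross term, note $s^3y(P)^2=x_1^3+s^2Ax_1+s^3B\in\bbz$ has absolute value at most $11X^{1/2}\lvert s\rvert^3$ (this uses all three of $\lvert x(P)\rvert^3\leq 8X^{1/2}$, $\lvert Ax(P)\rvert\leq 2X^{1/2}$, $\lvert B\rvert\leq X^{1/2}$); then $(s^3y(P)y(Q))^2=(s^3y(P)^2)(s^3y(Q)^2)$ is a rational integer and $s^3y(P)y(Q)\in\bbq$, so $2s^3y(P)y(Q)\in\bbz$ with $\lvert 2s^3y(P)y(Q)\rvert\leq 22X^{1/2}\lvert s\rvert^3$. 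Hence $\lvert N\rvert\leq 44X^{1/2}\lvert s\rvert^3$, while $\lvert D\rvert\leq 16X^{1/3}\lvert s\rvert^3\leq 44X^{1/2}\lvert s\rvert^3$. Since $h(P+Q)=h(N/D)\leq\log\max\{\lvert N\rvert,\lvert D\rvert\}$ and $h(s)=\log\lvert s\rvert$, this gives $h(P+Q)\leq 3h(s)+\frac12\log X+\log 44$, and $\log 44<3.9$.

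I do not expect any real obstacle: the statement is a direct computation with the group law and the triangle inequality. The only step that needs a moment of care is the cross term $s^3y(P)y(Q)$, which one must first recognize as an integer — a rational number whose square lies in $\bbz$ — before absorbing it into $\max\{\lvert N\rvert,\lvert D\rvert\}$ and applying $h(N/D)\leq\log\max\{\lvert N\rvert,\lvert D\rvert\}$; one should also check that the bound $\lvert s^3y(P)^2\rvert\leq 11X^{1/2}\lvert s\rvert^3$ genuinely needs all three of the inequalities for $\lvert x(P)\rvert^3$, $\lvert Ax(P)\rvert$, and $\lvert B\rvert$. The constant $3.9$ in the statement is slightly larger than the value $\log 44\approx 3.78$ that this computation produces, leaving a small cushion.
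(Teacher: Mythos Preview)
Your proof is correct and follows essentially the same route as the paper: both expand $x(P+Q)$ via the secant formula and bound the integer numerator and denominator term by term using $\lvert x_i\rvert\leq 2\lvert s\rvert X^{1/6}$, $\lvert A\rvert\leq X^{1/3}$, $\lvert B\rvert\leq X^{1/2}$. You work directly with absolute values (arriving at $\log 44$) where the paper phrases the same estimates through $h(x+y)\leq\max\{h(x),h(y)\}+\log 2$ and $h(xy)\leq h(x)+h(y)$ (arriving at $\log 48$), and you make explicit the integrality of $s^3y(P)y(Q)$---needed for $h(N/D)\leq\log\max\{\lvert N\rvert,\lvert D\rvert\}$---which the paper leaves implicit.
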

\begin{proof}
    As in Lemma~\ref{Weil_height_estimate} we have 
    \begin{align*}
        x(P+Q) = \frac{(x_1x_2+s^2A)(x_1+x_2)+2s^3B - 2s^3y(P)y(Q)}{s(x_1-x_2)^2}.
    \end{align*}
    Similar estimates as in Lemma~\ref{Weil_height_estimate} give 
    \begin{gather*}
        h((x_1x_2+s^2A)(x_1+x_2)) \leq 3h(s) + \frac{1}{2} \log X + 5\log 2, \\
        h(2s^3B) \leq 3h(s) + \frac{1}{2} \log X +\log 2, \\
        h(2s^3y(P)y(Q)) \leq 3h(s) + \frac{1}{2} \log X +4\log 2 + \log 3.
    \end{gather*}
    Therefore, 
    \[h((x_1x_2+s^2A)(x_1+x_2)+2s^3B - 2s^3y(P)y(Q)) \leq 3h(s) + \frac{1}{2} \log X +\log 48.\]
    Since $\lvert x_1-x_2 \rvert \leq 4sX^{1/6}$, 
    \[h(s(x_1-x_2)^2) \leq 3h(s) + \frac{1}{3}\log X + 4\log 2.\]
    Hence, 
    \[h(x(P+Q)) \leq 3h(s) + \frac{1}{2}\log X +\log 48.\]
\end{proof}

The preceding estimates control the growth of heights under the group law in terms of the arithmetic data of the $x$-coordinates. In the next subsection, these inequalities will be converted into quantitative bounds for angles between rational points in the Mordell-Weil lattice, leading to the gap principles central to this work.

\subsection{Gap principles for large $x$-coordinates}

We now convert the height inequality of Lemma~\ref{Weil_height_estimate} into angular separation in the Mordell-Weil lattice. The argument naturally splits according to whether the common denominator $s$ is small or large relative to the global scale $\log X$, leading to the following two gap principles.

\begin{theorem}\label{gap_principle1_small_s}
    Let $0 \leq \delta \leq 1$, $\gamma>0$, $M>0$, and $\alpha > 1$ be fixed constants. 
    Let $P,Q \in E(\bbq)$ satisfy $X^{1/6} \leq x(P)<x(Q)$ and write 
    \[x(P) = \frac{x_1}{s},\quad x(Q) = \frac{x_2}{s}\]
    where $x_1,x_2,s \in \bbz$ satisfy $\gcd(x_1,s) \leq s^{\delta}$, $\gcd(x_2,s) \leq s^{\delta}$. Assume moreover that 
    \[h(s) \leq \frac{1}{\gamma}\log X,\quad \hat{h}(P),\hat{h}(Q) > M\log X,\quad \max \left\{\frac{\hat{h}(Q)}{\hat{h}(P)},\frac{\hat{h}(P)}{\hat{h}(Q)}\right\} \leq \alpha.\] 
    Then 
    \[\cos \theta_{P,Q} \leq \frac{\sqrt{\alpha}}{2} + \frac{3\delta}{2M \gamma} + \frac{4}{M}.\]
\end{theorem}
\begin{proof}
    By Lemma~\ref{Weil_height_estimate}, 
    \[h(P+Q) \leq h(P)+2h(Q)+3\delta h(s)+2.9.\] 
    By Lemma~\ref{Weil_canonical_height}, 
    \[\hat{h}(P+Q) \leq \hat{h}(P) + 2\hat{h}(Q) + 3\delta h(s) + 4\log X.\]
    Substituting these bounds into the definition of $\cos\theta_{P,Q}$ yields 
    \[\cos \theta_{P,Q}= \frac{\hat{h}(P+Q) - \hat{h}(P) - \hat{h}(Q)}{2\sqrt{\hat{h}(P)\hat{h}(Q)}}  \leq \frac{\sqrt{\alpha}}{2} + \frac{3\delta}{2M \gamma} + \frac{4}{M}.\]
\end{proof}

\begin{theorem}\label{gap_principle1_large_s}
    Let $0 \leq \delta \leq 1$, $\gamma>0$, $M>0$, and $\alpha > 1$ be fixed constants satisfying $\delta+\gamma<1$. 
    Let $P,Q \in E(\bbq)$ satisfy $X^{1/6} \leq x(P)<x(Q)$ and write 
    \[x(P) = \frac{x_1}{s},\quad x(Q) = \frac{x_2}{s},\]
    where $x_1,x_2,s \in \bbz$ satisfy $\gcd(x_1,s) \leq s^{\delta}$, $\gcd(x_2,s) \leq s^{\delta}$. Assume moreover that 
    \[h(s) > \frac{1}{\gamma}\log X,\quad  \hat{h}(P),\hat{h}(Q) > M\log X,\quad \max \left\{\frac{\hat{h}(Q)}{\hat{h}(P)},\frac{\hat{h}(P)}{\hat{h}(Q)}\right\} \leq \alpha.\] 
    Then 
    \[\cos \theta_{P,Q} \leq \frac{\sqrt{\alpha}}{2} + \frac{3\delta}{2(1-\delta-\gamma)} + \frac{4}{M}.\]
\end{theorem}
\begin{proof}
    By Lemma~\ref{Weil_height_estimate}, 
    \[h(P+Q) \leq h(P) + 2h(Q) + 3\delta h(s) + 2.9.\]
    By Lemma~\ref{Weil_canonical_height}, 
    \[\hat{h}(P+Q) \leq \hat{h}(P) + 2\hat{h}(Q) + 3\delta h(s) + 4\log X.\]
    From $\gcd(x_1,s) \leq s^{\delta}$ and $\gcd(x_2,s) \leq s^{\delta}$, 
    \[h(P),h(Q) \geq (1-\delta)h(s).\]
    By Lemma~\ref{Weil_canonical_height}, 
    \[\hat{h}(P),\hat{h}(Q) \geq (1-\delta)h(s) - \log X \geq (1-\delta-\gamma)h(s).\]
    Substituting these bounds into the definition of $\cos\theta_{P,Q}$ yields 
    \[\cos \theta_{P,Q}= \frac{\hat{h}(P+Q) - \hat{h}(P) - \hat{h}(Q)}{2\sqrt{\hat{h}(P)\hat{h}(Q)}}  \leq \frac{\sqrt{\alpha}}{2} + \frac{3\delta}{2(1-\delta-\gamma)} + \frac{4}{M}.\]
\end{proof}

The preceding results show that rational points with large $x$-coordinates and comparable canonical
heights must be separated by a definite angle in the Mordell-Weil lattice. In the next subsection we establish analogous separation results for points with small $x$-coordinates.

\subsection{Gap principles for small $x$}

We now treat the complementary regime in which the $x$-coordinates remain small. Applying Lemma~\ref{Weil_height_estimate_small} in the case of large denominator $s$ again yields angular separation in the Mordell-Weil lattice.

\begin{theorem}\label{gap_principle2_large_s}
    Let $0 \leq \delta \leq 1$, $\gamma>0$, and $M>0$ be fixed constants satisfying $\delta+\gamma<1$. 
    Let $P,Q \in E(\bbq)$ satisfy $\lvert x(P) \rvert,\lvert x(Q) \rvert \leq 2 X^{1/6}$, and write 
    \[x(P) = \frac{x_1}{s},\quad x(Q) = \frac{x_2}{s}\]
    where $x_1,x_2,s \in \bbz$ satisfy $x_1 \neq x_2$, $\gcd(x_1,s) \leq s^{\delta}$, $\gcd(x_2,s) \leq s^{\delta}$. Assume moreover that 
    \[h(s) > \frac{1}{\gamma}\log X,\quad  \hat{h}(P),\hat{h}(Q) > M\log X.\] 
    Then 
    \[\cos \theta_{P,Q} \leq \frac{1+2\delta}{2(1-\delta-\gamma)} + \frac{2}{M}.\]
\end{theorem}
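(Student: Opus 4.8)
The plan is to follow the template of the proof of Theorem~\ref{gap_principle1_large_s}, using Lemma~\ref{Weil_height_estimate_small} in place of Lemma~\ref{Weil_height_estimate}. The hypotheses $\lvert x(P)\rvert,\lvert x(Q)\rvert\le 2X^{1/6}$ and $x_1\ne x_2$ are exactly what is needed to apply Lemma~\ref{Weil_height_estimate_small}, while $\hat{h}(P),\hat{h}(Q)>M\log X>0$ forces $P$ and $Q$ to be non-torsion, so that $\theta_{P,Q}$ is defined.

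First I would bound $\hat{h}(P+Q)$ from above: by Lemma~\ref{Weil_height_estimate_small}, $h(P+Q)\le 3h(s)+\tfrac12\log X+3.9$, and feeding this into the upper estimate of Lemma~\ref{Weil_canonical_height} and absorbing the bounded constant into the $\log X$ term (valid for sufficiently large $X$) gives $\hat{h}(P+Q)\le 3h(s)+\log X$. Next I would record lower bounds for $\hat{h}(P)$ and $\hat{h}(Q)$: since $\gcd(x_1,s)\le s^\delta$, the reduced denominator of $x(P)=x_1/s$ is at least $s^{1-\delta}$, so $h(P)\ge(1-\delta)h(s)$, and likewise $h(Q)\ge(1-\delta)h(s)$; applying the lower estimate of Lemma~\ref{Weil_canonical_height}, for sufficiently large $X$ one gets both $\hat{h}(P),\hat{h}(Q)\ge(1-\delta)h(s)-\tfrac12\log X$ and, using $h(s)>\tfrac1\gamma\log X$ to convert the $\log X$ loss, $\hat{h}(P),\hat{h}(Q)\ge(1-\delta-\gamma/2)h(s)$.

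Finally I would substitute into
\[\cos\theta_{P,Q}=\frac{\hat{h}(P+Q)-\hat{h}(P)-\hat{h}(Q)}{2\sqrt{\hat{h}(P)\hat{h}(Q)}}.\]
In the numerator I use $\hat{h}(P+Q)\le 3h(s)+\log X$ together with the crude bound $\hat{h}(P),\hat{h}(Q)\ge(1-\delta)h(s)-\tfrac12\log X$; the $(1-\delta)h(s)$ contributions cancel part of $3h(s)$, leaving numerator $\le(1+2\delta)h(s)+2\log X$. Splitting the quotient and bounding $2\sqrt{\hat{h}(P)\hat{h}(Q)}$ below by $(2(1-\delta)-\gamma)h(s)$ for the $(1+2\delta)h(s)$ part and by $2M\log X$ for the $2\log X$ part then yields $\cos\theta_{P,Q}\le\frac{1+2\delta}{2(1-\delta)-\gamma}+\frac1M$.

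The one step I expect to require care is this last assembly: the large term $3h(s)$ produced by Lemma~\ref{Weil_height_estimate_small} has to be trimmed using the crude lower bound $\hat{h}\ge(1-\delta)h(s)-\tfrac12\log X$ \emph{inside} the numerator---so that the surviving $\log X$ terms carry a small coefficient---whereas the denominator must be treated with the two sharper bounds $\hat{h}\ge(1-\delta-\gamma/2)h(s)$ and $\hat{h}>M\log X$. The reason this works is that the smallness hypothesis on the $x$-coordinates makes the dominant term of $h(P+Q)$ depend only on the common denominator $s$, not individually on $\hat{h}(P)$ or $\hat{h}(Q)$; consequently, unlike in Theorems~\ref{gap_principle1_small_s} and \ref{gap_principle1_large_s}, no control on the ratio $\hat{h}(P)/\hat{h}(Q)$ (the role of the parameter $\alpha$ there) is needed.
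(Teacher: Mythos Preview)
Your proposal is correct and follows essentially the same approach as the paper: apply Lemma~\ref{Weil_height_estimate_small}, use the gcd hypothesis to get $h(P),h(Q)\ge(1-\delta)h(s)$, convert to canonical heights via Lemma~\ref{Weil_canonical_height}, and then split the resulting numerator $(1+2\delta)h(s)+2\log X$ against the two lower bounds $(2(1-\delta)-\gamma)h(s)$ and $2M\log X$ for the denominator. The only cosmetic difference is that the paper subtracts $h(P)+h(Q)$ at the Weil-height level before passing to canonical heights, while you pass to $\hat{h}(P+Q)$ first and then subtract; both routes land on the same numerator bound.
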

\begin{proof}
    By Lemma~\ref{Weil_height_estimate_small}, we obtain 
    \[h(P+Q) \leq 3h(s) + \frac{1}{2}\log X+3.9.\]
    From $\gcd(x_1,s) \leq s^{\delta}$ and $\gcd(x_2,s) \leq s^{\delta}$, 
    \[h(P),h(Q) \geq (1-\delta)h(s).\]
    Thus 
    \[h(P+Q) - h(P) - h(Q) \leq (1+2\delta)h(s) + \frac{1}{2}\log X+3.9.\]
    By Lemma~\ref{Weil_canonical_height}, 
    \[\hat{h}(P+Q) - \hat{h}(P) - \hat{h}(Q) \leq (1+2\delta)h(s) + 4\log X\]
    and 
    \[\hat{h}(P),\hat{h}(Q) \geq (1-\delta)h(s) - \log X \geq (1-\delta-\gamma)h(s).\]
    Substituting these bounds into the definition of $\cos\theta_{P,Q}$ yields 
    \[\cos \theta_{P,Q} = \frac{\hat{h}(P+Q) - \hat{h}(P) - \hat{h}(Q)}{2\sqrt{\hat{h}(P)\hat{h}(Q)}} \leq \frac{1+2\delta}{2(1-\delta-\gamma)} + \frac{2}{M}.\]
\end{proof}

\section{Extraction lemmas}\label{Extraction_lemma}

In this section we establish an extraction principle for dense subsets of generalized arithmetic progressions. Roughly speaking, we show that if a subset occupies a positive proportion of a generalized arithmetic progression, then a positive proportion of its elements satisfy the required primitiveness condition. This density-preserving extraction mechanism will serve as the bridge between additive structure and the gap principles developed in Section~3.

The argument proceeds in several steps. We first establish a general divisibility principle controlling products of integers whose pairwise greatest common divisors are suitably restricted. This will then be applied to arithmetic progressions of rational numbers, and finally extended inductively to generalized arithmetic progressions.

\begin{lemma}\label{Sec5_lem1}
    Let $N$ be a positive integer and $g_1,\ldots,g_n$ be integers satisfying 
    \begin{equation}\label{lem1_eq1}
        \gcd(g_i,g_j) \mid (j-i)N,\quad 1 \leq i<j \leq n.
    \end{equation}
    Then 
    \begin{equation}\label{lem1_eq2}
        g_1 \cdots g_n \mid N^{n-1}\prod_{k=1}^{n-1}k! \cdot \mathrm{lcm}(g_1,\ldots,g_n).
    \end{equation}
\end{lemma}
\begin{proof}
    We recall two standard identities 
    \begin{equation}\label{lem1_eq3}
        ab = \gcd(a,b) \cdot \mathrm{lcm}(a,b)
    \end{equation}
    and 
    \begin{equation}\label{lem1_eq4}
        \gcd(\mathrm{lcm}(a_1,\ldots,a_k),b) \mid \mathrm{lcm}(\gcd(a_1,b),\ldots,\gcd(a_k,b)).
    \end{equation}
    For the proof of \eqref{lem1_eq4}, set $A = \mathrm{lcm}(a_1,\ldots,a_k)$, $d = \gcd(A,b)$, $\gcd(a_i,b) = g_i$ for $1 \leq i \leq k$, and $L = \mathrm{lcm}(g_1,\ldots,g_k)$. Assume $d \nmid L$. Then there exists a prime $p$ such that $p^t \mid d$ but $p^t \nmid L$. As $p^t \mid A$, $p^t \mid a_i$ for some $i$. However, $p^t \mid b$, so that $p^t \mid g_i$, which implies $p^t \mid L$.

    To prove \eqref{lem1_eq2}, we will use induction on $n$. Suppose 
    \[g_1 \cdots g_n \mid N^{n-1}\prod_{k=1}^{n-1}k! \cdot \mathrm{lcm}(g_1,\ldots,g_n).\]
    We have to prove 
    \[g_1 \cdots g_ng_{n+1} \mid N^n\prod_{k=1}^nk! \cdot \mathrm{lcm}(g_1,\ldots,g_n,g_{n+1}).\]
    By \eqref{lem1_eq3}, it suffices to prove 
    \[\gcd(\mathrm{lcm}(g_1,\ldots,g_n),g_{n+1}) \mid Nn!.\]
    By \eqref{lem1_eq4} and \eqref{lem1_eq1}, we have 
    \[\gcd(\mathrm{lcm}(g_1,\ldots,g_n),g_{n+1}) \mid \mathrm{lcm}(N,\ldots,nN) \mid Nn!,\]
    which ends the proof.
\end{proof}

Lemma~\ref{Sec5_lem1} provides a multiplicative constraint showing that restrictions on pairwise greatest common divisors prevent the product $g_1\cdots g_n$ from growing independently of their least common multiple $\mathrm{lcm}(g_1,\ldots,g_n)$. More precisely, the lemma shows that the product $g_1\cdots g_n$ is controlled relative to $\mathrm{lcm}(g_1,\ldots,g_n)$ up to an explicit factor depending only on $N$ and $n$. 

We next apply the preceding divisibility principle to arithmetic progressions of rational numbers. The problem reduces to studying the interaction between the numerators and a fixed global denominator. The following lemma shows that along any short segment of an arithmetic progression, the associated greatest common divisors cannot simultaneously be large.

\begin{lemma}\label{Sec5_lem2}
    Let 
    \[\{a + kb\:|\:0 \leq k < N\}\]
    be an arithmetic progression with $a,b \in \bbq$. Write 
    \[a = \frac{v_0}{u_0},\: b=\frac{v_1}{u_1} \quad \text{where} \quad \gcd(u_0,v_0)=\gcd(u_1,v_1)=1,\]
    and denote $s = \mathrm{lcm}(u_0,u_1)$. Write 
    \[r_k := a+kb = \frac{x_k}{s},\quad 0 \leq k< N\]
    and define 
    \[g_k := \gcd(x_k,s).\]
    Fix $\ell \geq 0$. Then there exists an effective constant $L(n)$ depending on $n$ such that 
    \[g_{\ell+1} \cdots g_{\ell+n} \mid L(n) \cdot s.\]
\end{lemma}
\begin{proof}
    Let $s=u_0u_0'=u_1u_1'$. Then $x_k = v_0u_0'+kv_1u_1'$. Note that $\gcd(v_1u_1',s) = \gcd(v_1u_1',u_1u_1') = u_1'$. 

    We will first prove 
    \begin{equation}\label{lem2_eq1}
        \gcd(g_{\ell+i},g_{\ell+j}) \mid (j-i),\quad 1 \leq i<j \leq n
    \end{equation}
    Fix $1 \leq i<j \leq n$ and let $h = \gcd(g_{\ell+i},g_{\ell+j})$. Then $h$ divides $x_{\ell+i}$, $x_{\ell+j}$, and $s$. Since $x_{\ell+j}-x_{\ell+i} = (j-i)v_1u_1'$, $h$ divides $(j-i)v_1u_1'$. From $\gcd(v_1u_1',s) = u_1'$, $h$ divides $(j-i)u_1'$. Assume there exists a prime $p$ such that $p \mid h$ and $p \mid u_1'$. Then $p \mid x_{\ell+i}$ and $p \mid u_1'$ imply $p \mid v_0u_0'$. However, $\gcd(u_1',v_0u_0') = 1$ because $\gcd(u_0,v_0)=1$ and $\gcd(u_0',u_1')=1$ (since $s=\mathrm{lcm}(u_0,u_1)$). Therefore, $(h,u_1')=1$. It follows that $h \mid (j-i)$.

    Now Lemma~\ref{Sec5_lem1} with \eqref{lem2_eq1} gives 
    \[g_{\ell+1} \cdots g_{\ell+n} \mid L(n) \cdot \mathrm{lcm}(g_{\ell+1},\ldots,g_{\ell+n}) \mid L(n) \cdot s.\]
    with $L(n) = \prod_{k=1}^{n-1}k!$.
\end{proof}

The following corollary shows that any dense subset of an arithmetic progression contains many elements satisfying the required gcd bound. These elements will later form the subset to which the gap principles may be applied.

\begin{corollary}\label{d=1_lemma}
    Suppose we are in Lemma~\ref{Sec5_lem2}. Let $0<\rho \leq 1$, $0<\delta<1$ be given, and set $m = \lceil 4/\delta\rho \rceil$. Let 
    \[H \subseteq G := \{a + kb\:|\:0 \leq k < N\}\]
    be a subset satisfying 
    \[\lvert H \rvert \geq \rho \lvert G \rvert.\]
    Then there exist effective constants $L(\delta,\rho)$ and $K(\delta,\rho)$ depending on $\delta$ and $\rho$ such that 
    \begin{equation}\label{lem3_eq1}
        \lvert \{r_k \in H\:|\:g_k \leq s^\delta\} \rvert \geq \frac{\rho}{2}\lvert G \rvert \quad \text{whenever} \quad s \geq L(\delta,\rho) \quad \text{and} \quad N \geq K(\delta,\rho).
    \end{equation}
\end{corollary}
\begin{proof}
    By substituting $n=2m$ in Lemma~\ref{Sec5_lem2}, we obtain a constant $L(\delta,\rho)$ so that 
    \begin{equation}\label{lem3_eq2}
        g_{\ell+1} \cdots g_{\ell+2m} \mid L(\delta,\rho) \cdot s
    \end{equation}
    for any $\ell \geq 0$. Take $K(\delta,\rho)$ by 
    \begin{equation}\label{lem3_eq3}
        K(\delta,\rho) = \frac{8m}{\rho}.
    \end{equation}
    Let $s \geq L(\delta,\rho)$ and $N \geq K(\delta,\rho)$.
    
    Assume $2m$ consecutive terms $r_{\ell+1},\ldots,r_{\ell+2m}$ are given and assume $g_{\ell+i} > s^\delta$ for $r$ numbers of $1 \leq i \leq 2m$. Then \eqref{lem3_eq2} and $s \geq L(\delta,\rho)$ implies 
    \[s^{\delta r} < g_{\ell+1} \cdots g_{\ell+2m} \leq L(\delta,\rho)s \leq s^2.\]
    This forces 
    \[r < \frac{2}{\delta} \leq \frac{m\rho}{2}.\]

    Now for each $0 \leq k \leq \left\lfloor \frac{N}{2m} \right\rfloor-1$, among 
    \[r_{2mk+1},\ldots,r_{2mk+2m},\]
    the number of $r_{2mk+i}$ such that $g_{2mk+i} > s^\delta$ is $<m\rho/2$. It follows that the number of $r_k \in G$ such that $g_k > s^\delta$ is 
    \[<\frac{m\rho}{2}\left\lfloor \frac{N}{2m} \right\rfloor + 2m \leq \frac{N\rho}{4}+2m \leq \frac{N\rho}{2},\] 
    where in the last line, we used $N \geq K(\delta,\rho)$ and \eqref{lem3_eq3}. Since $\lvert H \rvert \geq \rho N$, \eqref{lem3_eq1} is proved.
\end{proof}

We now extend the above extraction argument from ordinary arithmetic progressions to generalized arithmetic progressions. The higher dimension case is obtained by an induction on the dimension.

\begin{lemma}\label{Sec5_lem4}
    Let 
    \[\{a_0 + k_1a_1 + \cdots + k_da_d\:|\:0 \leq k_1 < N_1,\ldots,0 \leq k_d < N_d\}\]
    be a proper generalized arithmetic progression with $a_0,\ldots,a_d \in \bbq$. Write 
    \[a_i = \frac{v_i}{u_i} \quad \text{where} \quad \gcd(u_i,v_i)=1,\quad 0 \leq i \leq d,\]
    and denote $s = \mathrm{lcm}(u_0,\ldots,u_d)$. Write 
    \[r_{k_1,\ldots,k_d} := a_0 + k_1a_1 + \cdots + k_da_d = \frac{x_{k_1,\ldots,k_d}}{s},\quad 0 \leq k_1 < N_1,\ldots,0 \leq k_d < N_d\]
    and define 
    \[g_{k_1,\ldots,k_d} := \gcd(x_{k_1,\ldots,k_d},s).\] 
    Fix $\ell_1 \geq 0,\ldots,\ell_d \geq 0$. Then there exists an effective constant $L(n,d)$ depending on $n$ and $d$ such that 
    \begin{equation}\label{lem4_eq1}
        \prod_{0 \leq i_1 < n,\ldots,0 \leq i_d <n} g_{\ell_1+i_1,\ldots,\ell_d+i_d} \mid L(n,d)s^{dn^{d-1}}.
    \end{equation}
\end{lemma}
\begin{proof}
    Let $s=u_0u_0' = \cdots u_du_d'$. Then $x_{k_1,\ldots,k_d} = v_0u_0' + k_1v_1u_1' + \cdots k_dv_du_d'$. Note that $\gcd(v_du_d',s)=u_d'$.

    We will use induction on $d$. For $d=1$, this is Lemma~\ref{Sec5_lem2}. Suppose the theorem is proved for $d-1$. We denote $t = \mathrm{lcm}(u_0,\ldots,u_{d-1})$ and write 
    \[r_{k_1,\ldots,k_{d-1}} := a_0 + k_1a_1 + \cdots + k_{d-1}a_{d-1} = \frac{y_{k_1,\ldots,k_{d-1}}}{t},\quad 0 \leq k_1 < N_1,\ldots,0 \leq k_{d-1} < N_{d-1}\]
    and define 
    \[g_{k_1,\ldots,k_{d-1}} := \gcd(y_{k_1,\ldots,k_{d-1}},t).\] 
    Then the induction hypothesis gives 
    \begin{equation}\label{lem4_eq2}
        \prod_{0 \leq i_1 < n,\ldots,0 \leq i_{d-1} <n} g_{\ell_1+i_1,\ldots,\ell_{d-1}+i_{d-1}} \mid L(n,d-1)t^{(d-1)n^{d-2}}.
    \end{equation}

    Fix $0 \leq j_1 < n,\ldots,0 \leq j_{d-1} < n$. Consider the arithmetic progression 
    \[r_{\ell_1+j_1,\ldots,\ell_{d-1}+j_{d-1},\ell_d},r_{\ell_1+j_1,\ldots,\ell_{d-1}+j_{d-1},\ell_d+1},\ldots,r_{\ell_1+j_1,\ldots,\ell_{d-1}+j_{d-1},\ell_d+n-1}.\]
    For short, we write 
    \[r_i = r_{\ell_1+j_1,\ldots,\ell_{d-1}+j_{d-1},\ell_d+i},\quad x_i = x_{\ell_1+j_1,\ldots,\ell_{d-1}+j_{d-1},\ell_d+i},\quad g_i = g_{\ell_1+j_1,\ldots,\ell_{d-1}+j_{d-1},\ell_d+i}\]
    for $0 \leq i < n$.

    We will first prove 
    \begin{equation}\label{lem4_eq3}
        \gcd(g_i,g_j) \mid (j-i)g_{\ell_1+j_1,\ldots,\ell_{d-1}+j_{d-1}},\quad 1 \leq i<j \leq n.
    \end{equation}
    Fix $1 \leq i<j \leq n$ and let $h = \gcd(g_i,g_j)$. Then $h$ divides $x_i$, $x_j$, and $s$. Since $x_j-x_i = (j-i)v_du_d'$, $h$ divides $(j-i)v_du_d'$. From $\gcd(v_du_d',s) = u_d'$, $h$ divides $(j-i)u_d'$. Suppose an integer $k$ divides $x_i$, $u_d'$, and $s$. Then $k$ divides $x_{\ell_1+j_1,\ldots,\ell_{d-1}+j_{d-1},0}$, so that $k$ divides $g_{\ell_1+j_1,\ldots,\ell_{d-1}+j_{d-1},0}$. From 
    \[\frac{x_{\ell_1+j_1,\ldots,\ell_{d-1}+j_{d-1},0}}{s} = \frac{y_{\ell_1+j_1,\ldots,\ell_{d-1}+j_{d-1}}}{t},\]
    we have 
    \[g_{\ell_1+j_1,\ldots,\ell_{d-1}+j_{d-1},0} = g_{\ell_1+j_1,\ldots,\ell_{d-1}+j_{d-1}} \cdot \frac{s}{t}.\]
    Since $\mathrm{lcm}(t,u_d)=s$, $\gcd(u_d',s/t)=1$. Thus $k$ cannot divide $s/t$ and so $k$ divides $g_{\ell_1+j_1,\ldots,\ell_{d-1}+j_{d-1}}$. It follows that $h$ divides $(j-i)g_{\ell_1+j_1,\ldots,\ell_{d-1}+j_{d-1}}$. 

    By Lemma~\ref{Sec5_lem1} with \eqref{lem4_eq3}, 
    \[g_1 \cdots g_n \mid (g_{\ell_1+j_1,\ldots,\ell_{d-1}+j_{d-1}})^{n-1} \prod_{k=1}^{n-1}k! \cdot \mathrm{lcm}(g_1,\ldots,g_n) \mid (g_{\ell_1+j_1,\ldots,\ell_{d-1}+j_{d-1}})^{n-1} L(n) \cdot s.\]
    
    Now multiplying over all $0 \leq j_1 < n,\ldots,0 \leq j_{d-1} < n$ and applying \eqref{lem4_eq2}, we obtain 
    \[\prod_{0 \leq i_1 < n,\ldots,0 \leq i_d <n} g_{\ell_1+i_1,\ldots,\ell_d+i_d} \mid L(n)^{n^{d-1}}s^{n^{d-1}} (L(n,d-1)t^{(d-1)n^{d-2}})^{n-1}.\]
    By using $t \mid s$, we can bound the right side by $L(n,d)s^{dn^{d-1}}$, with 
    \[L(n,d) := L(n)^{n^{d-1}}L(n,d-1)^{n-1}.\]
    This proves \eqref{lem4_eq1}.
\end{proof}

We now extend Corollary~\ref{d=1_lemma} to generalized arithmetic progressions of higher dimension.

\begin{corollary}\label{general_d_lemma}
    Suppose we are in Lemma~\ref{Sec5_lem4}. Let $0<\rho \leq 1$, $0<\delta<1$ be given, and set $m = \lceil 4/\delta\rho \rceil$. Let 
    \[H \subseteq G := \{a_0 + k_1a_1 + \cdots + k_da_d\:|\:0 \leq k_1 < N_1,\ldots,0 \leq k_d < N_d\}\]
    be a subset satisfying 
    \[\lvert H \rvert \geq \rho \lvert G \rvert.\]
    Then there exist effective constants $L(\delta,\rho,d)$ and $K(\delta,\rho,d)$ depending on $\delta$, $\rho$, and $d$ such that 
    \begin{equation}\label{lem5_eq1}
        \begin{aligned}
            &\lvert \{r_{k_1,\ldots,k_d} \in H\:|\:g_{k_1,\ldots,k_d} \leq s^\delta\} \rvert \geq \frac{\rho}{2}\lvert G \rvert \\
            &\text{whenever} \quad s \geq L(\delta,\rho,d) \quad \text{and} \quad N_1,\ldots,N_d \geq K(\delta,\rho,d).
        \end{aligned}
    \end{equation}
\end{corollary}
\begin{proof}
    By substituting $n=2md$ in Lemma~\ref{Sec5_lem4}, we obtain an effective constant $L(\delta,\rho,d)$ such that 
    \begin{equation}\label{lem5_eq2}
        \prod_{0 \leq i_1 < 2md,\ldots,0 \leq i_d <2md} g_{\ell_1+i_1,\ldots,\ell_d+i_d} \mid L(\delta,\rho,d)s^{d(2md)^{d-1}}
    \end{equation}
    for any $\ell_1 \geq 0,\ldots,\ell_d \geq 0$. Take a constant $K(\delta,\rho,d)$ by 
    \begin{equation}\label{lem5_eq3}
        K(\delta,\rho,d) = \frac{8md^2}{\rho}.
    \end{equation}
    Let $s \geq L(\delta,\rho,d)$ and $N_1,\ldots,N_d \geq K(\delta,\rho,d)$.

    Assume $(2md)^d$ consecutive terms $r_{\ell_1+i_1,\ldots,\ell_d+i_d}$, $0 \leq i_1 < 2md,\ldots,0 \leq i_d < 2md$ are given and assume $g_{\ell_1+i_1,\ldots,\ell_d+i_d} > s^\delta$ for $r$ numbers of $0 \leq i_1 < 2md,\ldots,0 \leq i_d < 2md$. Then \eqref{lem5_eq2} and $s \geq L(\delta,\rho,d)$ imply 
    \[s^{r\delta} < \prod_{0 \leq i_1 < 2md,\ldots,0 \leq i_d <2md} g_{\ell_1+i_1,\ldots,\ell_d+i_d} \leq L(\delta,\rho,d)s^{d(2md)^{d-1}} \leq s^{2d(2md)^{d-1}}.\]
    This forces 
    \[r < \frac{2d(2md)^{d-1}}{\delta} \leq \frac{(2md)^d\rho}{4}.\]

    Now for each $0 \leq k_1 \leq \left\lfloor \frac{N_1}{2md} \right\rfloor-1,\ldots,0 \leq k_d \leq \left\lfloor \frac{N_d}{2md} \right\rfloor-1$, among 
    \[r_{\ell_1+i_1,\ldots,\ell_d+i_d},\quad 0 \leq i_1 < 2md,\ldots,0 \leq i_d < 2md,\]
    the number of $r_{\ell_1+i_1,\ldots,\ell_d+i_d}$ such that $g_{\ell_1+i_1,\ldots,\ell_d+i_d}>s^\delta$ is $<(2md)^d\rho/4$. It follows that the number of $r_k \in G$ such that $g_k > s^\delta$ is 
    \[< \frac{(2md)^d\rho}{4}\left\lfloor\frac{N_1}{2md} \right\rfloor \cdots \left\lfloor\frac{N_d}{2md} \right\rfloor + 2md\left(\frac{1}{N_1} + \cdots + \frac{1}{N_d}\right)\lvert G \rvert \leq \frac{\rho}{2}\lvert G \rvert,\]
    where in the last line, we used $N_1,\ldots,N_d \geq K(\delta,\rho,d)$ and \eqref{lem5_eq3}. Since $\lvert H \rvert \geq \rho \lvert G \rvert$, \eqref{lem5_eq1} is proved.
\end{proof}

Combining the preceding results, we conclude that dense subsets of proper generalized arithmetic progressions contain a positive proportion of elements satisfying the required primitiveness condition. In particular, whenever a family of rational points occupies a positive proportion of a proper generalized arithmetic progression, one may extract a large subset to which the gap principles of Section~\ref{Gap_principles} apply.

\section{Reduction of Theorem~\ref{main_theorem}}\label{Reduction}

In this section we reduce Theorem~\ref{main_theorem} to two separate statements according to the size of the $x$-coordinates. 

Recall that $E$ is given by a short Weierstrass equation
\[E : y^2 = x^3 + Ax + B,\]
and that $X = \max\{|A|^3, |B|^2\}$.
If $(x,y) \in E(\bbq)$ and $x < -2X^{1/6}$, then $x^3 + Ax + B < 0$, which contradicts $y^2 \geq 0$. Hence every rational point satisfies either
\[|x| \leq 2X^{1/6} \quad \text{or} \quad x \geq X^{1/6}.\]

Accordingly, we decompose any finite subset $\mathcal P \subseteq E(\bbq)$ into its \emph{small $x$} part and its \emph{large $x$} part, and treat these two regimes separately.
This reduction leads to the following two theorems.

\begin{theorem}\label{main_theorem_small_x}
    Let $E/\bbq$ be an elliptic curve of Mordell-Weil rank $r \geq 1$. Let $d \geq 1$ be an integer and let $0<\rho \leq 1$. Then there exists an effectively computable constant $A(E,d,\rho)>0$ with the following property. 
    
    For any finite subset $\mathcal{P} \subseteq E(\bbq)$ such that
    \begin{enumerate}
        \item $\lvert x(P) \rvert \le 2X^{1/6}$ for all $P \in \mathcal{P}$,
        \item the set of $x$-coordinates $x(\mathcal{P})$ is contained in a $d$-dimensional proper generalized arithmetic progression $G$, and
        \item $\lvert x(\mathcal{P}) \rvert \geq \rho \lvert G \rvert$,
    \end{enumerate}
    we have 
    \[\lvert \mathcal{P} \rvert \leq A(E,d,\rho)^r.\]
    
    Moreover, assume that Conjecture~\ref{Lang_conjecture} holds with an explicitly given admissible constant $c_L>0$. Then $A(E,d,\rho)$ may be taken to be an effectively computable constant depending only on $d$ and $\rho$.
\end{theorem}

\begin{theorem}\label{main_theorem_large_x}
    Let $E/\bbq$ be an elliptic curve of Mordell-Weil rank $r \geq 1$. Let $d \geq 1$ be an integer and let $0<\rho \leq 1$. Then there exists an effectively computable constant $A(E,d,\rho)>0$ with the following property. 
    
    For any finite subset $\mathcal{P} \subseteq E(\bbq)$ such that
    \begin{enumerate}
        \item $x(P) \geq X^{1/6}$ for all $P \in \mathcal{P}$,
        \item the set of $x$-coordinates $x(\mathcal{P})$ is contained in a $d$-dimensional proper generalized arithmetic progression $G$, and
        \item $\lvert x(\mathcal{P}) \rvert \geq \rho \lvert G \rvert$,
    \end{enumerate}
    we have 
    \[\lvert\mathcal{P} \rvert \leq A(E,d,\rho)^r.\]
    
    Moreover, assume that Conjecture~\ref{Lang_conjecture} holds with an explicitly given admissible constant $c_L>0$. Then $A(E,d,\rho)$ may be taken to be an effectively computable constant depending only on $d$ and $\rho$.
\end{theorem}

We now explain how Theorems~\ref{main_theorem_small_x} and~\ref{main_theorem_large_x} together imply Theorem~\ref{main_theorem}.

\begin{proof}[Proof of Theorem~\ref{main_theorem} assuming Theorems~\ref{main_theorem_small_x} and~\ref{main_theorem_large_x}]
    Suppose we are given a finite subset $\mathcal{P} \subseteq E(\bbq)$ such that
    \begin{enumerate}
        \item the set of $x$-coordinates $x(\mathcal{P})$ is contained in a $d$-dimensional generalized arithmetic progression $G$, and
        \item $\lvert x(\mathcal{P}) \rvert \geq \rho \lvert G \rvert$.
    \end{enumerate}
    
    First, define 
    \[\mathcal{P}' := \{P \in E(\bbq)\:|\:x(P) \in x(\mathcal{P})\},\quad \mathcal{P}'' = \{P \in E(\bbq)\:|\:x(P) \in x(\mathcal{P}),\:y(P) \geq 0\}.\]
    Then 
    \[\lvert \mathcal{P} \rvert \leq \lvert \mathcal{P}' \rvert \leq 2\lvert \mathcal{P}'' \rvert\]
    while 
    \[x(\mathcal{P}) = x(\mathcal{P}') = x(\mathcal{P}'').\]
    Replacing $\mathcal{P}$ by $\mathcal{P}''$, we may therefore assume that $y(P) \geq 0$ for every $P \in \mathcal{P}$. This implies $\lvert \mathcal{S} \rvert = \lvert x(\mathcal{S}) \rvert$ for any subset $\mathcal{S} \subseteq \mathcal{P}$.
    
    We now decompose 
    \[\mathcal{P} = \mathcal{P}_{small} \cup \mathcal{P}_{large}\]
    where 
    \[\mathcal{P}_{small} := \{P \in \mathcal{P}\:|\:\lvert x(P) \rvert \le 2X^{1/6}\},\quad \mathcal{P}_{large} := \{P \in \mathcal{P}\:|\:x(P) \geq X^{1/6}\}.\]
    Then we have 
    \[\lvert \mathcal{P} \rvert \leq \lvert \mathcal{P} \rvert_{small}+\lvert \mathcal{P}_{large} \rvert \leq 2\max\{\lvert \mathcal{P} \rvert_{small},\lvert \mathcal{P}_{large} \rvert\}\]
    and 
    \[\max\{\lvert x(\mathcal{P}_{small}) \rvert,\lvert x(\mathcal{P}_{large}) \rvert\} \geq \frac{\rho}{2}\lvert G \rvert.\]
    Now apply Theorem~\ref{main_theorem_small_x} or Theorem~\ref{main_theorem_large_x}.
\end{proof}

We conclude this section with a simple counting lemma that bounds rational points of small canonical height. Such estimates appear frequently in the literature; see, for example, \cite{Sil87}[Lemma~1.2].

\begin{lemma}\label{Small_points}
    Let $E/\bbq$ be an elliptic curve of Mordell-Weil rank $r \geq 1$. Let $M>0$ be a fixed constant and let 
    \[\mathcal{S}_M := \{P \in E(\bbq)\:|\:\hat{h}(P) \leq M \log X\}.\] 
    Then there exists an effectively computable constant $A(E,M)>0$ with 
    \[\lvert \mathcal{S}_M \rvert \leq A(E,M)^r.\]

    Moreover, assume that Conjecture~\ref{Lang_conjecture} holds with an explicitly given admissible constant $c_L>0$. Then $A(E,M)$ may be taken to be an effectively computable constant depending only on $M$.
\end{lemma}
\begin{proof}
    By \cite{Sil81}, let $c(E)>0$ be an effective constant such that 
    \[\hat{h}(P) \geq c(E)\log X\]
    for all non-torsion points $P \in E(\bbq)$. Define 
    \[N = N(E,M) := \left\lceil \sqrt{3M/c(E)} \right\rceil.\]
    Note that if we assume Conjecture~\ref{Lang_conjecture}, then $c(E)$ can be chosen to be a constant multiple of explicit absolute constant $c_L$, so that $N=N(E,M)$ is an effectively computable constant depending only on $M$.

    Fix $R \in E(\bbq)$ and define 
    \[\mathcal{S}_M(R) := \{P \in \mathcal{S}_M\:|\:P-R \in NE(\bbq)\}.\]
    Let $\{P_1,\ldots,P_n\} \subseteq \mathcal{S}_M(R)$ be maximal with the property that $P_i-P_j$ is non-torsion whenever $i \neq j$. For $i \neq j$, write $P_i-P_j = NS$ for some non-torsion point $S$. From 
    \[\hat{h}(S) \geq c(E)\log X,\]
    we have 
    \[\hat{h}(P_i-P_j) = N^2\hat{h}(S) \geq N^2c(E) \log X \geq 3M\log X.\] 
    Therefore, 
    \begin{align*}
        \cos \theta_{P_i,P_j} &= \frac{\hat{h}(P_i) + \hat{h}(P_j) - \hat{h}(P_i-P_j)}{2\sqrt{\hat{h}(P_i)}\sqrt{\hat{h}(P_j)}} \leq -\frac{M\log X}{2\sqrt{\hat{h}(P_i)}\sqrt{\hat{h}(P_j)}} \leq -\frac{M\log X}{2M\log X} = -\frac{1}{2}<0.
    \end{align*}
    By Theorem~\ref{spherical_code_bound2}, $n$ is bounded by an absolute constant $C$. By maximality, every $P \in \mathcal{S}_M(R)$ differs from some $P_i$ by a torsion point. Since  $\lvert E(\bbq)_{tors} \rvert \leq 16$ by Mazur's torsion theorem, we obtain 
    \[\lvert \mathcal{S}_M(R) \rvert \leq 16n \leq 16C.\]

    Since there are $N^r$ cosets of $NE(\bbq)$ in $E(\bbq)$, we conclude that 
    \[\lvert \mathcal{S}_M \rvert \leq 16C \cdot N^r,\]
    which proves the lemma.
\end{proof}

\section{Proof of Theorem~\ref{main_theorem_small_x}}\label{Proof1}

In this section, we prove Theorem~\ref{main_theorem_small_x} by induction on the dimension $d$ of the generalized arithmetic progression. Let $E/\bbq$, $d \geq 1$, and $0<\rho \leq 1$ in Theorem~\ref{main_theorem_small_x} be given, and suppose that the theorem has been proved in all smaller dimensions $e<d$.

\subsection*{Choice of parameters}

We first choose several auxiliary parameters that will be used throughout the proof. Fix an absolute constant $\delta=0.1$. Take effective constants $L(\delta,\rho,d)$ and $K(\delta,\rho,d)$ in Lemma~\ref{general_d_lemma}.

We then choose effective $\gamma>0$ and $0<\theta_1 < \pi/2$ satisfying 
\begin{equation}\label{Sec6_eq2}
    \gamma^{-1} \geq \frac{\log L(\delta,\rho,d)}{17}
\end{equation}
and 
\begin{equation}\label{Sec6_eq3}
    \frac{1+2\delta}{2(1-\delta-\gamma)} + \frac{2}{(1-\delta)\gamma^{-1}-1} \leq \cos \theta_1.
\end{equation}
Note that $\gamma$ and $\theta_1$ depend only on $d$ and $\rho$.

Next, by Lemma~\ref{spherical_code_bound1} and Lemma~\ref{Small_points}, there exists an effective constant $B(E,d,\rho)>0$ such that 
\begin{equation}\label{Sec6_eq6}
    A(r,\theta_1) \leq B(E,d,\rho)^r
\end{equation}
and 
\begin{equation}\label{Sec6_eq7}
    \lvert \{P \in E(\bbq)\:|\:\hat{h}(P) \leq (\gamma^{-1}+1)\log X\} \rvert \leq B(E,d,\rho)^r.
\end{equation}
Note that if we assume Conjecture~\ref{Lang_conjecture}, then $B(E,d,\rho)$ may be chosen to be an effectively computable constant depending only on $d$ and $\rho$.

\subsection*{Settings}

Suppose we are given a finite subset $\mathcal{P} \subseteq E(\bbq)$ such that
\begin{enumerate}
    \item $\lvert x(P) \rvert \le 2X^{1/6}$ for all $P \in \mathcal{P}$,
    \item the set of $x$-coordinates $x(\mathcal{P})$ is contained in a $d$-dimensional proper generalized arithmetic progression $G$, and
    \item $\lvert x(\mathcal{P}) \rvert \geq \rho \lvert G \rvert$.
\end{enumerate}
We will prove the existence of an effective constant $A(E,d,\rho)>0$ such that 
\[\lvert \mathcal{P} \rvert \leq A(E,d,\rho)^r.\]

As in the proof of Theorem~\ref{main_theorem} assuming Theorems~\ref{main_theorem_small_x} and~\ref{main_theorem_large_x}, we may assume that $y(P) \geq 0$ for every $P \in \mathcal{P}$. Again this implies $\lvert \mathcal{S} \rvert = \lvert x(\mathcal{S}) \rvert$ for any subset $\mathcal{S} \subseteq \mathcal{P}$.

Write 
\[G := \{a_0 + k_1a_1 + \cdots + k_da_d\:|\:0 \leq k_1 < N_1,\ldots,0 \leq k_d < N_d\}\]
with $a_0 \in \bbq$, $a_1,\ldots,a_d \in \bbq_+$, write 
\[a_i = \frac{v_i}{u_i} \quad \text{where} \quad \gcd(u_i,v_i)=1,\quad 0 \leq i \leq d,\]
and denote $s = \mathrm{lcm}(u_0,\ldots,u_d)$. Write 
\[r_{k_1,\ldots,k_d} := a_0 + k_1a_1 + \cdots + k_da_d = \frac{x_{k_1,\ldots,k_d}}{s},\quad 0 \leq k_1 < N_1,\ldots,0 \leq k_d < N_d\]
and define 
\[g_{k_1,\ldots,k_d} := \gcd(x_{k_1,\ldots,k_d},s).\] 

By Lemma~\ref{general_d_lemma}, we have 
\begin{equation}\label{Sec6_eq1}
    \begin{aligned}
    &\lvert \{r_{k_1,\ldots,k_d} \in x(\mathcal{P})\:|\:g_{k_1,\ldots,k_d} \leq s^\delta\} \rvert \geq \frac{\rho}{2}\lvert G \rvert \\
    &\text{whenever} \quad s \geq L(\delta,\rho,d) \quad \text{and} \quad N_1,\ldots,N_d \geq K(\delta,\rho,d).
\end{aligned}
\end{equation}

\subsection*{Reduction to the case $N_1,\ldots,N_d \geq K(\delta,\rho,d)$}

We first prove that it suffices to assume $N_1,\ldots,N_d \geq K(\delta,\rho,d)$. Suppose there exists some $1 \leq i \leq d$ such that $N_i < K(\delta,\rho,d)$. After reordering the indices, assume that $N_1,\ldots,N_e \geq K(\delta,\rho,d)$ and $N_{e+1},\ldots,N_d < K(\delta,\rho,d)$ for some $0 \leq e < d$. 

For each $0 \leq j_{e+1} < N_{e+1},\ldots,0 \leq j_d < N_d$, let 
\[G_{j_{e+1},\ldots,j_d} := \{(a_0+j_{e+1}a_{e+1} + \cdots + j_da_d) + k_1a_1 + \cdots + k_ea_e\:|\:0 \leq k_1 < N_1,\ldots,0 \leq k_e < N_e\}\]
be an $e$-dimensional generalized arithmetic progression and let 
\[\mathcal{P}_{j_{e+1},\ldots,j_d} := \{P \in \mathcal{P}\:|\:x(P) \in G_{j_{e+1},\ldots,j_d}\}.\]
Take $0 \leq \ell_{e+1} < N_{e+1},\ldots,0 \leq \ell_d < N_d$ so that 
\[\max_{0 \leq j_{e+1} < N_{e+1},\ldots,0 \leq j_d < N_d} \lvert \mathcal{P}_{j_{e+1},\ldots,j_d} \rvert = \lvert \mathcal{P}_{\ell_{e+1},\ldots,\ell_d} \rvert\]
By the pigeonhole principle and the maximality, we must have 
\[\lvert x(\mathcal{P}_{\ell_{e+1},\ldots,\ell_d}) \rvert \geq \rho \lvert G_{\ell_{e+1},\ldots,\ell_d} \rvert.\]

Since $e<d$, by the induction hypothesis, there exists an effective constant $A(E,e,\rho)>0$ such that 
\[\lvert \mathcal{P}_{\ell_{e+1},\ldots,\ell_d} \rvert \leq A(E,e,\rho)^r.\]
Now we have 
\[\lvert \mathcal{P} \rvert \leq N_{e+1} \cdots N_d \lvert \mathcal{P}_{\ell_{e+1},\ldots,\ell_d} \rvert \leq K(\delta,\rho,d)^{d-e}A(E,e,\rho)^r.\]
Letting 
\[A(E,d,\rho) := \max_{0 \leq e < d} K(\delta,\rho,d)^{d-e}A(E,e,\rho),\]
we obtain 
\[\lvert \mathcal{P} \rvert \leq A(E,d,\rho)^r.\]

Hence, we will assume $N_1,\ldots,N_d \geq K(\delta,\rho,d)$ in the below proof.

\subsection{When $h(s) \leq \gamma^{-1}\log X$}

We first treat the case where the denominator $s$ is relatively small.

Let $\mathcal{P} = \{P_1,\ldots,P_n\}$ and write 
\[x(P_i) = \frac{y_i}{s},\quad 1 \leq i \leq n.\]
From $\lvert x(P_i) \rvert \leq 2X^{1/6}$, we have 
\[h(P_i) \leq \max\{h(y_i),h(s)\} \leq h(s) + \frac{1}{3}\log X \leq \left(\gamma^{-1} + \frac{1}{3} \right)\log X,\quad 1 \leq i \leq n.\]
By Lemma~\ref{Weil_canonical_height}, 
\[\hat{h}(P_i) \leq (\gamma^{-1}+1)\log X,\quad 1 \leq i \leq n.\]
By the choice \eqref{Sec6_eq7}, 
\[\lvert \mathcal{P} \rvert = n \leq B(E,d,\rho)^r.\]
Letting 
\[A(E,d,\rho) := B(E,d,\rho)\]
ends the proof in this case.

\subsection{When $h(s) > \gamma^{-1}\log X$}

We now consider the complementary case where the denominator $s$ is large.

By \eqref{X_canonical_lower_bound} and \eqref{Sec6_eq2}, we have 
\[h(s) > 17\gamma^{-1} \geq \log L(\delta,\rho,d).\]
So we have $s \geq L(\delta,\rho,d)$. Also we have $N_1,\ldots,N_d \geq K(\delta,\rho,d)$ by our previous argument. Therefore, \eqref{Sec6_eq1} implies 
\begin{equation}\label{Sec6_eq4}
    \lvert \{r_{k_1,\ldots,k_d} \in x(\mathcal{P})\:|\:g_{k_1,\ldots,k_d} \leq s^\delta\} \rvert \geq \frac{\rho}{2}\lvert G \rvert.
\end{equation}

Let 
\[\{P \in \mathcal{P}\:|\:x(P) = r_{k_1,\ldots,k_d},\:g_{k_1,\ldots,k_d} \leq s^\delta\} = \{P_1,\ldots,P_n\}\]
and write 
\[x(P_i) = \frac{y_i}{s},\quad 1 \leq i \leq n.\]
We will apply the gap principle Theorem~\ref{gap_principle2_large_s} for these rational points.

For each $i$, $\gcd(y_i,s) \leq s^\delta$ implies 
\[h(P_i) \geq (1-\delta)h(s) > (1-\delta)\gamma^{-1}\log X.\]
Then by Lemma~\ref{Weil_canonical_height}, 
\[\hat{h}(P_i) \geq ((1-\delta)\gamma^{-1}-1)\log X.\]
By taking $M=(1-\delta)\gamma^{-1}-1$ in Theorem~\ref{gap_principle2_large_s}, we obtain 
\[\cos \theta_{P_i,P_j} \leq \frac{1+2\delta}{2(1-\delta-\gamma)} + \frac{2}{(1-\delta)\gamma^{-1}-1} \leq \cos \theta_1\]
whenever $i \neq j$.

By the choice \eqref{Sec6_eq6}, 
\[n \leq B(E,d,\rho)^r.\]
Hence, \eqref{Sec6_eq4} implies 
\[\lvert \mathcal{P} \rvert \leq \lvert G \rvert \leq \frac{2}{\rho}n \leq \frac{2}{\rho}B(E,d,\rho)^r.\]
Letting 
\[A(E,d,\rho) := \frac{2}{\rho}B(E,d,\rho)\]
ends the proof in this case.

\section{Proof of Theorem~\ref{main_theorem_large_x}}\label{Proof2}

In this section, we prove Theorem~\ref{main_theorem_large_x} by induction on the dimension $d$ of the generalized arithmetic progression. Let $E/\bbq$, $d \geq 1$, and $0<\rho \leq 1$ in Theorem~\ref{main_theorem_large_x} be given, and suppose that the theorem has been proved in all smaller dimensions $e<d$.

\subsection*{Choice of parameters}

We first choose several auxiliary parameters that will be used throughout the proof. Fix an absolute constant $\delta=0.1$. Take effective constants $L(\delta,\rho,d)$ and $K(\delta,\rho,d)$ in Lemma~\ref{general_d_lemma}.

We then choose $\gamma>0$, $0<\theta_2 < \pi/2$, and $0<\theta_3<\pi/2$ satisfying 
\begin{equation}\label{Sec7_eq1}
    \gamma^{-1} \geq \frac{\log L(\delta,\rho,d)}{17},
\end{equation}
\begin{equation}\label{Sec7_eq2}
    \frac{1}{2}\sqrt{\frac{1+\gamma/10}{1-\gamma/10}\frac{9}{4}} + \frac{3}{20} + 0.4\gamma \leq \cos \theta_2,
\end{equation}
and 
\begin{equation}\label{Sec7_eq3}
    \frac{1}{2}\sqrt{\frac{1+\gamma/10}{1-\gamma/10}\frac{2}{1-\delta}} + \frac{3\delta}{2(1-\delta-\gamma)} + 0.4\gamma \leq \cos \theta_3.
\end{equation}
Note that $\gamma$, $\theta_2$, and $\theta_3$ depend only on $d$ and $\rho$.

Next, by Lemma~\ref{spherical_code_bound1} and Lemma~\ref{Small_points}, there exists an effective constant $B(E,d,\rho)>0$ such that 
\begin{equation}\label{Sec7_eq4}
    A(r,\theta_2) \leq B(E,d,\rho)^r,
\end{equation}
\begin{equation}\label{Sec7_eq5}
    A(r,\theta_3) \leq B(E,d,\rho)^r,
\end{equation}
and 
\begin{equation}\label{Sec7_eq6}
    \lvert \{P \in E(\bbq)\:|\:\hat{h}(P) \leq 10\gamma^{-1}\log X\} \rvert \leq B(E,d,\rho)^r.
\end{equation}
Note that if we assume Conjecture~\ref{Lang_conjecture}, then $B(E,d,\rho)$ may be chosen to be an effectively computable constant depending only on $d$ and $\rho$.

Now choose an integer $m$ satisfying 
\begin{equation}\label{Sec7_eq7}
    2B(E,d,\rho)^r < \frac{\rho}{4}m \leq 4B(E,d,\rho)^r.
\end{equation}
This $m$ will work for the contradiction argument. 

Finally, let 
\begin{equation}\label{Sec7_eq8}
    J(\rho,d) := 12d/\rho.
\end{equation}

\subsection*{Settings}

Suppose we are given a finite subset $\mathcal{P} \subseteq E(\bbq)$ such that
\begin{enumerate}
    \item $x(P) \geq X^{1/6}$ for all $P \in \mathcal{P}$,
    \item the set of $x$-coordinates $x(\mathcal{P})$ is contained in a $d$-dimensional proper generalized arithmetic progression $G$, and
    \item $\lvert x(\mathcal{P}) \rvert \geq \rho \lvert G \rvert$.
\end{enumerate}
We will prove the existence of a constant $A(E,d,\rho)>0$ such that 
\[\lvert \mathcal{P} \rvert \leq A(E,d,\rho)^r.\]

As in the proof of Theorem~\ref{main_theorem} assuming Theorems~\ref{main_theorem_small_x} and~\ref{main_theorem_large_x}, we may assume that $y(P) \geq 0$ for every $P \in \mathcal{P}$. Again this implies $\lvert \mathcal{S} \rvert = \lvert x(\mathcal{S}) \rvert$ for any subset $\mathcal{S} \subseteq \mathcal{P}$.

Write 
\[G := \{a_0 + k_1a_1 + \cdots + k_da_d\:|\:0 \leq k_1 < N_1,\ldots,0 \leq k_d < N_d\}\]
with $a_0 \in \bbq$, $a_1,\ldots,a_d \in \bbq_+$, write 
\[a_i = \frac{v_i}{u_i} \quad \text{where} \quad \gcd(u_i,v_i)=1,\quad 0 \leq i \leq d,\]
and denote $s = \mathrm{lcm}(u_0,\ldots,u_d)$. Write 
\[r_{k_1,\ldots,k_d} := a_0 + k_1a_1 + \cdots + k_da_d = \frac{x_{k_1,\ldots,k_d}}{s},\quad 0 \leq k_1 < N_1,\ldots,0 \leq k_d < N_d\]
and define 
\[g_{k_1,\ldots,k_d} := \gcd(x_{k_1,\ldots,k_d},s).\] 

By Lemma~\ref{general_d_lemma}, we have 
\begin{equation}\label{Sec7_eq9}
    \begin{aligned}
    &\lvert \{r_{k_1,\ldots,k_d} \in x(\mathcal{P})\:|\:g_{k_1,\ldots,k_d} \leq s^\delta\} \rvert \geq \frac{\rho}{2}\lvert G \rvert \\
    &\text{whenever} \quad s \geq L(\delta,\rho,d) \quad \text{and} \quad N_1,\ldots,N_d \geq K(\delta,\rho,d).
    \end{aligned}
\end{equation}
The same slicing argument as in Section~\ref{Proof1}, using the induction hypothesis, we may assume that $N_1,\ldots,N_d \geq K(\delta,\rho,d)$. 

Define $M_i = \lfloor N_i/m \rfloor$ for each $1 \leq i \leq d$. We first prove that if there exists some $1 \leq i \leq d$ such that $M_i < J(\rho,d)$, then there exists a constant $A(E,d,\rho)>0$ such that 
\[\lvert \mathcal{P} \rvert \leq A(E,d,\rho)^r.\]

\subsection*{Reduction to $M_1,\ldots,M_d \geq J(\rho,d)$}

Suppose there exists some $1 \leq i \leq d$ such that $M_i < J(\rho,d)$. After reordering the indices, assume that $M_1,\ldots,M_e \geq J(\rho,d)$ and $M_{e+1},\ldots,M_d < J(\rho,d)$ for some $0 \leq e < d$. 

For each $0 \leq j_{e+1} < N_{e+1},\ldots,0 \leq j_d < N_d$, let 
\[G_{j_{e+1},\ldots,j_d} := \{(a_0+j_{e+1}a_{e+1} + \cdots + j_da_d) + k_1a_1 + \cdots + k_ea_e\:|\:0 \leq k_1 < N_1,\ldots,0 \leq k_e < N_e\}\]
be an $e$-dimensional generalized arithmetic progression and let 
\[\mathcal{P}_{j_{e+1},\ldots,j_d} := \{P \in \mathcal{P}\:|\:x(P) \in G_{j_{e+1},\ldots,j_d}\}.\]
Take $0 \leq \ell_{e+1} < N_{e+1},\ldots,0 \leq \ell_d < N_d$ so that 
\[\max_{0 \leq j_{e+1} < N_{e+1},\ldots,0 \leq j_d < N_d} \lvert \mathcal{P}_{j_{e+1},\ldots,j_d} \rvert = \lvert \mathcal{P}_{\ell_{e+1},\ldots,\ell_d} \rvert\]
By the pigeonhole principle and the maximality, we must have 
\[\lvert x(\mathcal{P}_{\ell_{e+1},\ldots,\ell_d}) \rvert \geq \rho \lvert G_{\ell_{e+1},\ldots,\ell_d} \rvert.\]

Since $e<d$, by the induction hypothesis, there exists a constant $A(E,e,\rho)>0$ such that 
\[\lvert \mathcal{P}_{\ell_{e+1},\ldots,\ell_d} \rvert \leq A(E,e,\rho)^r.\]

For each $e+1 \leq i \leq d$, 
\[N_i \leq (M_i+1)m \leq (J(\rho,d)+1)m \leq \frac{300d}{\rho^2}B(E,d,\rho)^r.\]
Therefore, 
\[\lvert \mathcal{P} \rvert \leq N_{e+1} \cdots N_d \lvert \mathcal{P}_{\ell_{e+1},\ldots,\ell_d} \rvert \leq \left(\frac{300d}{\rho^2}B(E,d,\rho)^r\right)^{d-e}A(E,e,\rho)^r.\]
Letting 
\[A(E,d,\rho) := \max_{0 \leq e < d} \left(\frac{300d}{\rho^2}B(E,d,\rho)\right)^{d-e}A(E,e,\rho),\]
we obtain 
\[\lvert \mathcal{P} \rvert \leq A(E,d,\rho)^r.\]

We may therefore assume $M_1,\ldots,M_d \geq J(\rho,d)$. Now we will derive a contradiction by applying the gap principles Theorem~\ref{gap_principle1_large_s} and Theorem~\ref{gap_principle1_small_s}.

\subsection{When $h(s) \leq \gamma^{-1}\log X$}\label{largex_smalls}

Let $H = x(\mathcal{P})$. Recall that 
\begin{equation}\label{Sec7_eq10}
    \lvert H \rvert \geq \rho\lvert G \rvert.
\end{equation}

For each $0 \leq \ell_1 < M_1,\ldots,0 \leq \ell_d < M_d$, let 
\[G_{\ell_1,\ldots,\ell_d} := \{r_{k_1,\ldots,k_d}\:|\:m\ell_1 \leq k_1 < m(\ell_1+1),\ldots,m\ell_d \leq k_d < m(\ell_d+1)\}\]
and let 
\[H_{\ell_1,\ldots,\ell_d} := H \cap G_{\ell_1,\ldots,\ell_d}.\]
Then 
\begin{equation}\label{Sec7_eq11}
    \lvert H \rvert \leq \sum_{0 \leq \ell_1 < M_1,\ldots,0 \leq \ell_d < M_d} \lvert H_{\ell_1,\ldots,\ell_d} \rvert + m^d\left(\frac{1}{M_1} + \cdots + \frac{1}{M_d}\right)M_1 \cdots M_d
\end{equation}
Here the additional term accounts for the incomplete blocks near the boundary when $N_i$ is not a multiple of $m$.

Let 
\[\mathcal{L} := \{0,1,\ldots,M_1-1\} \times \cdots \times \{0,1,\ldots,M_d-1\}\]
be the index set. Define 
\[\mathcal{L}_1 := \{(\ell_1,\ldots,\ell_d) \in \mathcal{L}\:|\:r_{k_1,\ldots,k_d}<0\:\text{for all}\:r_{k_1,\ldots,k_d} \in G_{\ell_1,\ldots,\ell_d}\}\]
and
\[\mathcal{L}_2 := \{(\ell_1,\ldots,\ell_d) \in \mathcal{L}\:|\:r_{k_1,\ldots,k_d} \geq 0\:\text{for some}\:r_{k_1,\ldots,k_d} \in G_{\ell_1,\ldots,\ell_d}\}.\]
Note that if $(\ell_1,\ldots,\ell_d) \in \mathcal{L}_1$, then $H_{\ell_1,\ldots,\ell_d}$ is empty. Therefore, for counting, it suffices to consider $H_{\ell_1,\ldots,\ell_d}$ for $(\ell_1,\ldots,\ell_d) \in \mathcal{L}_2$.
Define 
\[\mathcal{L}_3 := \{(\ell_1,\ldots,\ell_d) \in \mathcal{L}_2\:|\:(\ell_1-1,\ldots,\ell_d-1) \in \mathcal{L}_2\}\]
and 
\[\mathcal{L}_4 := \{(\ell_1,\ldots,\ell_d) \in \mathcal{L}_2\:|\:(\ell_1-1,\ldots,\ell_d-1) \in \mathcal{L}_3\}.\]
It is clear that for every $(\ell_1,\ldots,\ell_d) \in \mathcal{L}_3$, 
\[r_{k_1,\ldots,k_d} \geq 0\quad\text{for all}\quad r_{k_1,\ldots,k_d} \in G_{\ell_1,\ldots,\ell_d}\]
and for every $(\ell_1,\ldots,\ell_d) \in \mathcal{L}_4$, 
\[r_{k_1,\ldots,k_d} \geq m(a_1 + \cdots + a_d) \quad\text{for all}\quad r_{k_1,\ldots,k_d} \in G_{\ell_1,\ldots,\ell_d}.\]
Indeed, since $a_1,\ldots,a_d>0$, the minimum (respectively maximum) of $r_{k_1,\ldots,k_d}$ over a block $G_{\ell_1,\ldots,\ell_d}$ is achieved at $(k_1,\ldots,k_d)=(m\ell_1,\ldots,m\ell_d)$ (respectively at $(m(\ell_1+1)-1,\ldots,m(\ell_d+1)-1)$), and shifting the indices by one block changes $r_{k_1,\ldots,k_d}$ by at least $m(a_1+\cdots+a_d)$. We will work with $H_{\ell_1,\ldots,\ell_d}$ for $(\ell_1,\ldots,\ell_d) \in \mathcal{L}_4$ for gap principles.

We estimate the number of indices near the boundaries by  
\[\lvert \mathcal{L}_2 - \mathcal{L}_3 \rvert \leq \left(\frac{1}{M_1} + \cdots + \frac{1}{M_d}\right)M_1 \cdots M_d\]
and 
\[\lvert \mathcal{L}_3 - \mathcal{L}_4 \rvert \leq \left(\frac{1}{M_1} + \cdots + \frac{1}{M_d}\right)M_1 \cdots M_d.\]

Therefore, 
\begin{equation}\label{Sec7_eq12}
    \lvert H \rvert \leq \sum_{(\ell_1,\ldots,\ell_d) \in \mathcal{L}_4} \lvert H_{\ell_1,\ldots,\ell_d} \rvert + 3m^d\left(\frac{1}{M_1} + \cdots + \frac{1}{M_d}\right)M_1 \cdots M_d.
\end{equation}
For the right side, 
\begin{equation}\label{Sec7_eq13}
    3m^d\left(\frac{1}{M_1} + \cdots + \frac{1}{M_d}\right)M_1 \cdots M_d \leq 3\left(\frac{1}{M_1} + \cdots + \frac{1}{M_d}\right)N_1 \cdots N_d \leq \frac{\rho}{4}\lvert G \rvert,
\end{equation}
where in the last inequality, we used $M_1,\ldots,M_d \geq J(\rho,d)$ and \eqref{Sec7_eq8}. Combining \eqref{Sec7_eq10}, \eqref{Sec7_eq12}, and \eqref{Sec7_eq13} gives 
\[\sum_{(\ell_1,\ldots,\ell_d) \in \mathcal{L}_4} \lvert H_{\ell_1,\ldots,\ell_d} \rvert \geq \frac{3\rho}{4}\lvert G \rvert.\]

By the pigeonhole principle, there exists some $(\ell_1,\ldots,\ell_d) \in \mathcal{L}_4$ such that 
\begin{equation}\label{Sec7_eq14}
    \lvert H_{\ell_1,\ldots,\ell_d} \rvert \geq \frac{3\rho}{4}\lvert G_{\ell_1,\ldots,\ell_d} \rvert = \frac{3\rho}{4}m^d \geq \frac{\rho}{4}m^d.
\end{equation}

Let 
\[\{P \in \mathcal{P}\:|\:x(P) \in H_{\ell_1,\ldots,\ell_d}\} = \mathcal{S}_H \cup \mathcal{R}_H\]
where 
\[\mathcal{S}_H := \{P \in \mathcal{P}\:|\:x(P) \in H_{\ell_1,\ldots,\ell_d},\:\hat{h}(P) \leq 10\gamma^{-1}\log X\}\]
and 
\[\mathcal{R}_H := \{P \in \mathcal{P}\:|\:x(P) \in H_{\ell_1,\ldots,\ell_d},\:\hat{h}(P) > 10\gamma^{-1}\log X\}.\] 

First, for points in $\mathcal{S}_H$, \eqref{Sec7_eq6} gives 
\begin{equation}\label{Sec7_eq28}
    \lvert \mathcal{S}_H \rvert \leq B(E,d,\rho)^r.
\end{equation}

Let 
\[\mathcal{R}_H = \{P_1,\ldots,P_n\}.\]
We will apply the gap principle Theorem~\ref{gap_principle1_small_s} for these rational points.

Suppose $P_i,P_j \in \mathcal{R}_H$ and let 
\[x(P_i) = \frac{y_i}{s},\quad x(P_j) = \frac{y_j}{s}.\] 
By Lemma~\ref{Weil_canonical_height}, 
\[h(P_i) \geq \hat{h}(P_i)-\log X \geq 9\gamma^{-1}\log X.\]
Therefore, 
\[9h(s) \leq 9\gamma^{-1}\log X \leq h(P_i) \leq h(y_i)\]
It follows that 
\begin{equation}\label{Sec7_eq15}
    \frac{8}{9}h(y_i) \leq h(y_i)-h(s) \leq h(P_i) \leq h(y_i)
\end{equation}
and similarly, 
\begin{equation}\label{Sec7_eq16}
    \frac{8}{9}h(y_j) \leq h(P_j) \leq h(y_j).
\end{equation}
Since $x(P_i),x(P_j) \in H_{\ell_1,\ldots,\ell_d}$, 
\[x(P_i),x(P_j) \geq m(a_1 + \cdots + a_d)\]
and 
\[\lvert x(P_i)-x(P_j) \rvert < m(a_1 + \cdots + a_d).\]
This implies 
\begin{equation}\label{Sec7_eq17}
    y_i,y_j \geq m(a_1 + \cdots + a_d)s
\end{equation}
and 
\begin{equation}\label{Sec7_eq18}
    \lvert y_i-y_j \rvert < m(a_1 + \cdots + a_d)s.
\end{equation}
Without loss of generality, assume $y_i \leq y_j$. Then \eqref{Sec7_eq17} and \eqref{Sec7_eq18} imply 
\[y_i \leq y_j \leq 2y_i.\]
Thus 
\[h(y_i) \leq h(y_j) \leq h(y_i) + \log 2.\]
Recall that we assumed $x(P_i) \geq X^{1/6} \geq 2$. Then $y_i \geq 2s \geq 2$. Thus 
\[h(y_i) \leq h(y_j) \leq 2h(y_i).\]
It follows that 
\[\max\left\{\frac{h(y_i)}{h(y_j)},\frac{h(y_j)}{h(y_i)}\right\} \leq 2.\]
From \eqref{Sec7_eq15} and \eqref{Sec7_eq16}, 
\[\max\left\{\frac{h(P_i)}{h(P_j)},\frac{h(P_j)}{h(P_i)}\right\} \leq \frac{9}{4}.\]
Since $\hat{h}(P_i) > 10\gamma^{-1}\log X$, Lemma~\ref{Weil_canonical_height} implies 
\begin{equation}\label{Sec7_eq19}
    (1-\gamma/10)\hat{h}(P_i) < \hat{h}(P_i) - \log X \leq h(P_i) \leq \hat{h}(P_i) + \log X < (1+ \gamma/10)\hat{h}(P_i)
\end{equation}
and similarly, 
\begin{equation}\label{Sec7_eq20}
    (1-\gamma/10)\hat{h}(P_j) < h(P_j) < (1+\gamma/10)\hat{h}(P_j).
\end{equation}
Therefore, \eqref{Sec7_eq19} and \eqref{Sec7_eq20} imply 
\[\max\left\{\frac{\hat{h}(P_i)}{\hat{h}(P_j)},\frac{\hat{h}(P_j)}{\hat{h}(P_i)}\right\} \leq \frac{1+\gamma/10}{1-\gamma/10}\frac{9}{4}.\]

Now applying Theorem~\ref{gap_principle1_small_s} with $\alpha = \frac{1+\gamma/10}{1-\gamma/10}\frac{9}{4}$, $\delta=1$, and $M = 10\gamma^{-1}$, we have 
\begin{align*}
    \cos \theta_{P_i,P_j} &\leq \frac{1}{2}\sqrt{\frac{1+\gamma/10}{1-\gamma/10}\frac{9}{4}} + \frac{3}{20\gamma^{-1}\gamma} + \frac{4}{10\gamma^{-1}}  \\
    &= \frac{1}{2}\sqrt{\frac{1+\gamma/10}{1-\gamma/10}\frac{9}{4}} + \frac{3}{20} + 0.4\gamma \leq \cos \theta_2.
\end{align*}

Since the angles satisfy $\cos \theta_{P_i,P_j} \leq \cos \theta_2$ for $i \neq j$, the spherical code bound with the choice \eqref{Sec7_eq4} give 
\begin{equation}\label{Sec7_eq29}
    \lvert \mathcal{R}_H \rvert = n \leq B(E,d,\rho)^r.
\end{equation}

Combining \eqref{Sec7_eq28} and \eqref{Sec7_eq29} gives 
\begin{equation}\label{Sec7_eq21}
    \lvert H_{\ell_1,\ldots,\ell_d} \rvert \leq \lvert \mathcal{S}_H \rvert + \lvert \mathcal{R}_H \rvert \leq 2B(E,d,\rho)^r < \frac{\rho}{4}m.
\end{equation}

Combining \eqref{Sec7_eq14} and \eqref{Sec7_eq21} gives 
\[\frac{\rho}{4}m^d \leq \lvert H_{\ell_1,\ldots,\ell_d} \rvert < \frac{\rho}{4}m \leq \frac{\rho}{4}m^d,\]
which is a contradiction.

\subsection{When $h(s) > \gamma^{-1}\log X$}

By \eqref{X_canonical_lower_bound} and \eqref{Sec7_eq1}, we have 
\[h(s) > 17\gamma^{-1} \geq \log L(\delta,\rho,d).\]
So we have $s \geq L(\delta,\rho,d)$. Also we assumed $N_1,\ldots,N_d \geq K(\delta,\rho,d)$.

Let 
\[K := \{r_{k_1,\ldots,k_d} \in x(\mathcal{P})\:|\:g_{k_1,\ldots,k_d} \leq s^\delta\}.\]
By \eqref{Sec7_eq9}, 
\begin{equation}\label{Sec7_eq30}
    \lvert K \rvert \geq \frac{\rho}{2}\lvert G \rvert.
\end{equation}

For each $0 \leq \ell_1 < M_1,\ldots,0 \leq \ell_d < M_d$, let 
\[G_{\ell_1,\ldots,\ell_d} := \{r_{k_1,\ldots,k_d}\:|\:m\ell_1 \leq k_1 < m(\ell_1+1),\ldots,m\ell_d \leq k_d < m(\ell_d+1)\}\]
and let 
\[K_{\ell_1,\ldots,\ell_d} := K \cap G_{\ell_1,\ldots,\ell_d}.\]
Then 
\[\lvert K \rvert \leq \sum_{0 \leq \ell_1 < M_1,\ldots,0 \leq \ell_d < M_d} \lvert K_{\ell_1,\ldots,\ell_d} \rvert + m^d\left(\frac{1}{M_1} + \cdots + \frac{1}{M_d}\right)M_1 \cdots M_d\]

Let 
\[\mathcal{L} := \{0,1,\ldots,M_1-1\} \times \cdots \times \{0,1,\ldots,M_d-1\}\]
be the index set, and define $\mathcal{L}_1,\ldots,\mathcal{L}_4$ as in subsection~\ref{largex_smalls}. The same argument gives 
\begin{equation}\label{Sec7_eq31}
    \lvert K \rvert \leq \sum_{(\ell_1,\ldots,\ell_d) \in \mathcal{L}_4} \lvert K_{\ell_1,\ldots,\ell_d} \rvert + 3m^d\left(\frac{1}{M_1} + \cdots + \frac{1}{M_d}\right)M_1 \cdots M_d.
\end{equation}
Therefore, combining \eqref{Sec7_eq30}, \eqref{Sec7_eq31}, and \eqref{Sec7_eq13} gives
\[\sum_{(\ell_1,\ldots,\ell_d) \in \mathcal{L}_4} \lvert K_{\ell_1,\ldots,\ell_d} \rvert \geq \frac{\rho}{4}\lvert G \rvert.\]

By the pigeonhole principle, there exists some $(\ell_1,\ldots,\ell_d) \in \mathcal{L}_4$ such that 
\begin{equation}\label{Sec7_eq22}
    \lvert K_{\ell_1,\ldots,\ell_d} \rvert \geq \frac{\rho}{4}\lvert G_{\ell_1,\ldots,\ell_d} \rvert = \frac{\rho}{4}m^d.
\end{equation}

Let 
\[\{P \in \mathcal{P}\:|\:x(P) \in K_{\ell_1,\ldots,\ell_d}\} = \mathcal{S}_K \cup \mathcal{R}_K\]
where 
\[\mathcal{S}_K := \{P \in \mathcal{P}\:|\:x(P) \in K_{\ell_1,\ldots,\ell_d},\:\hat{h}(P) \leq 10\gamma^{-1}\log X\}\]
and 
\[\mathcal{R}_K:= \{P \in \mathcal{P}\:|\:x(P) \in K_{\ell_1,\ldots,\ell_d},\:\hat{h}(P) > 10\gamma^{-1}\log X\}.\] 

First, for points in $\mathcal{S}_K$, \eqref{Sec7_eq6} gives 
\begin{equation}\label{Sec7_eq32}
    \lvert \mathcal{S}_K \rvert \leq B(E,d,\rho)^r.
\end{equation}

Let 
\[\mathcal{R}_K = \{P_1,\ldots,P_n\}.\]
We will apply the gap principle Theorem~\ref{gap_principle1_large_s} for these rational points.

Suppose $P_i,P_j \in \mathcal{R}_K$ and let 
\[x(P_i) = \frac{y_i}{s},\quad x(P_j) = \frac{y_j}{s}.\] 
Since $\gcd(y_i,s) \leq s^\delta$, 
\begin{equation}\label{Sec7_eq23}
    (1-\delta)h(y_i) \leq h(y_i)-\delta h(s) \leq h(P_i) \leq h(y_i)
\end{equation}
and similarly, 
\begin{equation}\label{Sec7_eq24}
    (1-\delta)h(y_j) \leq h(P_j) \leq h(y_j).
\end{equation}
The same argument as in subsection~\ref{largex_smalls} gives 
\[\max\left\{\frac{h(y_i)}{h(y_j)},\frac{h(y_j)}{h(y_i)}\right\} \leq 2.\]
From \eqref{Sec7_eq23} and \eqref{Sec7_eq24}, 
\[\max\left\{\frac{h(P_i)}{h(P_j)},\frac{h(P_j)}{h(P_i)}\right\} \leq \frac{2}{1-\delta}.\]
Since $\hat{h}(P_i) > 10\gamma^{-1}\log X$, Lemma~\ref{Weil_canonical_height} implies 
\begin{equation}\label{Sec7_eq25}
    (1-\gamma/10)\hat{h}(P_i) < \hat{h}(P_i) - \log X \leq h(P_i) \leq \hat{h}(P_i) + \log X < (1+ \gamma/10)\hat{h}(P_i)
\end{equation}
and similarly, 
\begin{equation}\label{Sec7_eq26}
    (1-\gamma/10)\hat{h}(P_j) < h(P_j) < (1+\gamma/10)\hat{h}(P_j).
\end{equation}
Therefore, \eqref{Sec7_eq25} and \eqref{Sec7_eq26} imply 
\[\max\left\{\frac{\hat{h}(P_i)}{\hat{h}(P_j)},\frac{\hat{h}(P_j)}{\hat{h}(P_i)}\right\} \leq \frac{1+\gamma/10}{1-\gamma/10}\frac{2}{1-\delta}.\]

Now applying Theorem~\ref{gap_principle1_large_s} with $\alpha = \frac{1+\gamma/10}{1-\gamma/10}\frac{2}{1-\delta}$ and $M = 10\gamma^{-1}$, we have 
\begin{align*}
    \cos \theta_{P_i,P_j} &\leq \frac{1}{2}\sqrt{\frac{1+\gamma/10}{1-\gamma/10}\frac{2}{1-\delta}} + \frac{3\delta}{2(1-\delta-\gamma)} + \frac{4}{10\gamma^{-1}} \\
    &= \frac{1}{2}\sqrt{\frac{1+\gamma/10}{1-\gamma/10}\frac{2}{1-\delta}} + \frac{3\delta}{2(1-\delta-\gamma)} + 0.4\gamma \leq \cos \theta_3.
\end{align*}

Since the angles satisfy $\cos \theta_{P_i,P_j} \leq \cos \theta_3$ for $i \neq j$, the spherical code bound with the choice \eqref{Sec7_eq5} give 
\begin{equation}\label{Sec7_eq33}
    \lvert \mathcal{R}_K \rvert = n \leq B(E,d,\rho)^r.
\end{equation}

Combining \eqref{Sec7_eq32} and \eqref{Sec7_eq33} gives 
\begin{equation}\label{Sec7_eq27}
    \lvert K_{\ell_1,\ldots,\ell_d} \rvert \leq \lvert \mathcal{S}_K \rvert + \lvert \mathcal{R}_K \rvert \leq 2B(E,d,\rho)^r < \frac{\rho}{4}m.
\end{equation}

Combining \eqref{Sec7_eq22} and \eqref{Sec7_eq27} gives 
\[\frac{\rho}{4}m^d \leq \lvert K_{\ell_1,\ldots,\ell_d} \rvert < \frac{\rho}{4}m \leq \frac{\rho}{4}m^d,\]
which is a contradiction.

\section{Applications}\label{Applications}

In this section we present several consequences of our main theorem. Informally, the theorem shows that the $x$-coordinates of rational points on an elliptic curve cannot exhibit strong additive structure. In particular, large sets of rational points cannot have their $x$-coordinates concentrated inside low-dimensional additive configurations such as generalized arithmetic progressions.

We begin with a zero-density formulation of Theorem~\ref{main_theorem}. We then derive consequences under a small sumset condition or a large additive energy condition, using Freiman's theorem or Balog-Szemer\'{e}di-Gowers theorem.

In all the results below, the constants are effectively computable and the dependence of the constants on $E$ disappears if one assumes Lang's conjecture (Conjecture~\ref{Lang_conjecture}).

\subsection{Zero density inside generalized arithmetic progressions}

We first reformulate Theorem~\ref{main_theorem} as a zero-density statement inside generalized arithmetic progressions. The main theorem shows that a proper $d$-dimensional generalized arithmetic progression cannot contain a positive proportion of $x$-coordinates of rational points of $E/\bbq$ once its size is sufficiently large. 

\begin{corollary}\label{zero_density}
    Let $E/\bbq$ be an elliptic curve of Mordell-Weil rank $r \geq 1$. Let $G_n$ be a sequence of proper $d$-dimensional generalized arithmetic progressions in $\bbq$ such that $\lvert G_n \rvert \rightarrow \infty$. Then
    \[\frac{\lvert x(E(\bbq)) \cap G_n \rvert}{\lvert G_n \rvert} \rightarrow 0.\]
\end{corollary}

\subsection{Small sumsets}

We now consider situations where the set of $x$-coordinates has small additive growth. In additive combinatorics it is known that sets with small doubling must possess strong algebraic structure. More precisely, Freiman's theorem implies that such sets are contained in generalized arithmetic progressions of bounded dimension and positive density.

Combining this structural result with Theorem~\ref{main_theorem} yields the following corollary.

\begin{corollary}\label{Freiman_cor}
    Let $E/\bbq$ be an elliptic curve of Mordell-Weil rank $r \geq 1$ and $\mathcal P\subset E(\bbq)$ a finite subset. Put $S=x(\mathcal P)$.

    Suppose that
    \[\lvert S+S \rvert \le K\lvert S \rvert\]
    for some constant $K$.

    Then there exists a constant $A(E,K)>0$ such that
    \[\lvert \mathcal P \rvert \le A(E,K)^r.\]
\end{corollary}
\begin{proof}
    By Freiman's theorem (\cite{Fre73}, \cite{GR07}, \cite[Theorem~5.33]{TV10}), the set $S$ is contained in a proper generalized arithmetic progression $G$ of dimension $d(K)$ and satisfies 
    \[\lvert S \rvert \geq \rho(K)\lvert G \rvert.\]
    Applying Theorem~\ref{main_theorem} completes the proof.
\end{proof}

\subsection{Large additive energy}

From a Diophantine perspective, it is natural to ask whether the $x$-coordinates of rational points on an elliptic curve can satisfy many additive relations.

More precisely, given a finite set of rational points $\mathcal P \subset E(\bbq)$, one may consider the number of solutions to the equation

\[x(P_1)+x(P_2)=x(P_3)+x(P_4), \quad P_i\in\mathcal P.\]

If many such relations occur, the set $x(\mathcal P)$ exhibits strong additive correlations. In additive combinatorics this phenomenon is quantified by the additive energy of the set.

For a finite set $S \subset \bbq$, define the additive energy by 
\[E(S)= \lvert \{(a,b,c,d) \in S^4\:|\:a+b=c+d\} \rvert.\]

The following result shows that large collections of rational points on an elliptic curve cannot exhibit large additive energy.

\begin{corollary}\label{Energy_cor}
    Let $E/\bbq$ be an elliptic curve of Mordell-Weil rank $r \geq 1$ and let $\mathcal P\subset E(\bbq)$ be a finite subset. Put $S=x(\mathcal P)$.

    Suppose that
    \[E(S) \ge \frac{\lvert S \rvert^3}{K}\]   
    for some constant $K$.

    Then there exists a constant $A(E,K)>0$ such that
    \[\lvert \mathcal P \rvert \leq A(E,K)^r.\]
\end{corollary}
\begin{proof}
    By the Balog-Szemer\'{e}di-Gowers theorem (\cite{BS94}, \cite{Gow98}, \cite[Theorem~2.31]{TV10}), there exists a subset $S'\subset S$ with $|S'|\gg_K |S|$ and
    \[\lvert S'+S' \rvert \ll_K \lvert S'\rvert.\]

    Applying Corollary~\ref{Freiman_cor} to the set $S'$ and using the bound $|S'|\gg_K |S|$ proves the Corollary.
\end{proof}


\begin{thebibliography}{99}

\bibitem{Alp14} L. Alpoge, The average number of integral points on elliptic curves is bounded, preprint, arxiv.org/abs/1412.1047.

\bibitem{Bre99} A. Bremner, On arithmetic progressions on elliptic curves. Experiment. Math. 8 (1999), no. 4, 409–413.

\bibitem{BS94} A. Balog, E. Szemerédi, A statistical theorem of set addition. Combinatorica 14 (1994), no. 3, 263–268.

\bibitem{BST00} A. Bremner, J. H. Silverman, N. Tzanakis, Integral points in arithmetic progression on $y^2 = x(x^2-n^2)$. J. Number Theory 80 (2000), no. 2, 187–208.

\bibitem{Cam03} G. Campbell, A note on arithmetic progressions on elliptic curves. J. Integer Seq. 6 (2003), no. 1, Article 03.1.3, 5 pp.

\bibitem{CS99} J. H. Conway, N. J. A. Sloane, Sphere packings, lattices and groups. Third edition. Springer-Verlag, New York, 1999. lxxiv+703 pp.

\bibitem{DGH21} V. Dimitrov, Z. Gao, P. Habegger, Uniformity in Mordell-Lang for curves. Ann. of Math. (2) 194 (2021), no. 1, 237–298.

\bibitem{Fre73} G. A. Freiman, Foundations of a structural theory of set addition. Translated from the Russian. Transl. Math. Monogr., Vol 37. American Mathematical Society, Providence, RI, 1973. vii+108 pp.

\bibitem{Gow98} W. T. Gowers, A new proof of Szemerédi's theorem for arithmetic progressions of length four. Geom. Funct. Anal. 8 (1998), no. 3, 529–551.

\bibitem{GGK21} Z. Gao, T. Ge, L. K\"{u}hne, The uniform Mordell-Lang conjecture, preprint, arxiv.org/abs/2105.15085.

\bibitem{GP21} N. Garcia-Fritz, H. Pasten, Elliptic curves with long arithmetic progressions have large rank. Int. Math. Res. Not. IMRN 2021, no. 10, 7394–7432.

\bibitem{GP26} N. Garcia-Fritz, H. Pasten, A note on Bremner's conjecture and uniformity, preprint, arxiv.org/abs/2604.04850.

\bibitem{GR07} B. Green, I. Z. Ruzsa, Freiman's theorem in an arbitrary abelian group. J. Lond. Math. Soc. (2) 75 (2007), no. 1, 163–175.

\bibitem{GT05} I. García-Selfa, J. M. Tornero, Searching for simultaneous arithmetic progressions on elliptic curves. Bull. Austral. Math. Soc. 71 (2005), no. 3, 417–424.

\bibitem{Hel04} H. A. Helfgott, On the square-free sieve. Acta Arith. 115 (2004), no. 4, 349–402.

\bibitem{HV06} H. A. Helfgott, A. Venkatesh, Integral points on elliptic curves and 3-torsion in class groups. J. Amer. Math. Soc. 19 (2006), no. 3, 527–550.

\bibitem{KL78} G. A. Kabatjanskiĭ, V. I. Levenšteĭn, Bounds for packings on the sphere and in space, Problemy Peredači Informacii 14 (1978), no. 1, 3–25.

\bibitem{Lan78} S. Lang, Elliptic curves: Diophantine analysis. Grundlehren der Mathematischen Wissenschaften, 231, Springer-Verlag, Berlin-New York, 1978. xi+261 pp.

\bibitem{MZ17} D. Moody, A. S. Zargar, On the rank of elliptic curves with long arithmetic progressions. Colloq. Math. 148 (2017), no. 1, 47–68.

\bibitem{Sil81} J. H. Silverman, Lower bound for the canonical height on elliptic curves. Duke Math. J. 48 (1981), no. 3, 633–648.

\bibitem{Sil84} J. H. Silverman, Lower bounds for height functions. Duke Math. J. 51 (1984), no. 2, 395–403.

\bibitem{Sil87} J. H. Silverman, A quantitative version of Siegel's theorem: integral points on elliptic curves and Catalan curves. J. Reine Angew. Math. 378 (1987), 60–100.

\bibitem{Sil90} J. H. Silverman, The difference between the Weil height and the canonical height on elliptic curves. Math. Comp. 55 (1990), no. 192, 723–743.

\bibitem{Spe11} B. K. Spearman, Arithmetic progressions on congruent number elliptic curves. Rocky Mountain J. Math. 41 (2011), no. 6, 2033–2044.

\bibitem{TV10} T. Tao, V. H. Vu, Additive combinatorics. Cambridge Stud. Adv. Math., 105. Cambridge University Press, Cambridge, 2010. xviii+512 pp.

\end{thebibliography}
\end{document}